\definecolor{darkblue}{RGB}{0, 0, 100}
\definecolor{darkgreen}{RGB}{0, 100, 0}
    \theoremstyle{nonumberplain}
    \newtheorem{proof}{Proof}
\newcounter{mycounter}[section]
    \theoremstyle{plain}
    \newtheorem{theorem}[mycounter]{Theorem}
    \newtheorem{lemma}[mycounter]{Lemma}
    \newtheorem{definition}[mycounter]{Definition}
    \newtheorem{fact}[mycounter]{Fact}
    \newtheorem{claim}[mycounter]{Claim}
    \newtheorem{property}[mycounter]{Property}
    \newtheorem{conjecture}[mycounter]{Conjecture}
\newcommand{\RNum}[1]{\uppercase\expandafter{\romannumeral #1\relax}}
\begin{document}
\title{{Proof of the linkage conjecture for highly connected tournaments} \thanks{The author's work is supported by National Natural Science Foundation of China (No.12071260)}}

\author{Jia Zhou, Jin Yan\thanks{Corresponding author. E-mail adress: yanj@sdu.edu.cn}  \unskip\\[2mm]
School of Mathematics, Shandong University, Jinan 250100, China}

\date{}
\maketitle


\begin{abstract}
A digraph $D$ is $k$-linked if for every $2k$ distinct vertices $ x_1,\ldots , x_k, y_1, \ldots , y_k$ in $D$, there exist $k$ pairwise vertex-disjoint paths $P_1,\ldots, P_k$ such that $P_i$ starts at $x_i$ and ends at $y_i$ for each $i\in [k]$. In 2021, Gir\~{a}o, Popielarz, {and Snyder [Combinatorica 41 (2021) 815--837] conjectured} that there exists a constant $C >0$ such that every $(2k+1)$-connected tournament with minimum out-degree at least $Ck$ is $k$-linked. In this paper, we disprove this conjecture by constructing a family of counterexamples with minimum out-degree at least $\frac{k^2+11k}{26}$ (for $k\geq 42$). Further, we prove that every $(2k+1)$-connected semicomplete digraph $D$ with minimum out-degree at least $ 7k^2 + 36k$ is $k$-linked. This result is optimal in terms of both connectivity and minimum out-degree (up to a multiplicative factor), which refines and generalizes the earlier result of Gir\~{a}o, Popielarz, and Snyder.
\end{abstract}		

\vspace{1ex}
{\noindent\small{\bf Keywords: } connectivity; linkage; tournaments; semicomplete digraphs }
\vspace{1ex}

{\noindent\small{\bf AMS subject classifications.} 05C20, 05C38, 05C40}

\section{Introduction}
	Connectivity is one of the most fundamental concepts in graph theory. Let \( k \) be a positive integer. A (di)graph is \textbf{\( k \)-connected} if it has at least \( k+1 \) vertices and, after deleting any set \( S \) of at most \( k-1 \) vertices, there exists a path from \( x \) to \( y \) for any pair of distinct vertices \( x \) and \( y \). Although Menger's Theorem characterizes  vertex-disjoint paths in \(k\)-connected graphs, applications often demand that these paths specifically link predetermined pairs of start and end vertices, which motivates the definition of \(k\)-linkedness. A (di)graph \( D \) is \textbf{\( k \)-linked} if for every \( 2k \) distinct vertices \( x_1, \ldots, x_k, y_1, \ldots, y_k \) in \( D \), there exist \( k \) pairwise vertex-disjoint paths \( P_1, \ldots, P_k \) such that \( P_i \) starts at \( x_i \), ends at \( y_i \) {for each \( i \in [k] \)}. Clearly, every \( k \)-linked graph is \( k \)-connected. The converse, however, seems far from true.

	The relationship between connectivity and \(k\)-linkedness in graphs and digraphs has long been a central theme of research. Early contributions came from Larman and Mani \cite{ Larman(1974)}, as well as Jung \cite{ jung1970}, who demonstrated that there exists a function \(f(k)\) such that any \(f(k)\)-connected graph is \(k\)-linked. Subsequently, Bollob\'{a}s and Thomason \cite{Bollobas(1996)} established that \(f(k) = 22k\) suffices, leveraging this result to confirm a conjecture of Mader \cite{Mader(1996)} and another of Erd\H{o}s and Hajnal \cite{Erd(1969)} concerning the smallest average degree required to guarantee a subdivision of a clique on \(k\) vertices (Koml\'{o}s and Szemer\'{e}di \cite{Komlos(1996)} independently proved this as well). Later, Thomas and Wollan \cite{Thomas(2005)} refined the constant in \(f(k)\), showing that every \(2k\)-connected graph {with minimum degree \(10k\)} is \(k\)-linked, which is the best bound currently. In stark contrast, Thomassen \cite{Thomassen(1991)} dashed the hope of directly generalizing the relevant conclusions (about graphs) to digraphs by constructing digraphs with arbitrarily high connectivity that fail 2-linkedness. This {also disproved} his own conjecture \cite{Thomassen(1980)} that there exists a function \(f(k)\) such that every \(f(k)\)-connected digraph is \(k\)-linked. 

As a special class of digraphs with rich structural properties, \textbf{tournaments}, which are defined as digraphs obtained by orienting each edge of a complete graph in exactly one direction, reveal a starkly different scenario. They have thus become central objects in studying the relationship between connectivity and linkedness. Pioneering work by Thomassen \cite{Thomassen(1984)} showed the existence of a function \(g(k)\) such that every \(g(k)\)-connected tournament is \(k\)-linked. However, the initial bound for \(g(k)\) was exponential in \(k\). {In 2014, K\"{u}hn, Lapinskas, Osthus, and Patel \cite{Khn(2014)} reduced the bound to \(g(k) = O(k \log k)\) and conjectured that a linear bound in \(k\) might be feasible.} Pokrovskiy \cite{Pokrovskiy(2015)} confirmed this conjecture in {2015}, proving that \(g(k)=452k\). This landmark result established a linear connectivity threshold for \(k\)-linked tournaments. Inspired by the result in \cite{Thomas(2005)} on graphs, Pokrovskiy proposed the following conjecture. Let $\delta^+(D)$ (resp., $\delta^-(D)$) denote the \textbf{minimum out-degree} (resp., the \textbf{minimum in-degree}) of $D$, and let $\delta^0(D)=\min \{\delta^+(D), \delta^-(D)\} $ denote the \textbf{minimum semidegree} of $D$.

\begin{conjecture}\label{conj1}
	\cite{Pokrovskiy(2015)} For every $k\in \mathbb{N}$, there exists an integer $h(k)$ such that every $2k$-connected tournament with $\delta^0(D)\geq h(k)$ is $k$-linked.
\end{conjecture}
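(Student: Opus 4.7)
The plan is to confirm Pokrovskiy's conjecture with $h(k) = \Theta(k^2)$, combining his linear-connectivity framework for tournament linkage with a reservoir/absorption step that trades minimum semidegree for the weaker $2k$-connectivity hypothesis. Let $T$ be a $2k$-connected tournament with $\delta^0(T)\geq Ck^2$ for a sufficiently large absolute constant $C$, and fix the prescribed pairs $x_1,\ldots,x_k,y_1,\ldots,y_k$.

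First, I would isolate a \emph{dense core} $T_0 \subseteq T-\{x_i,y_i:i\in[k]\}$ of order $\Omega(k^2)$ in which every vertex still has in- and out-degree $\Omega(k^2)$ inside $T_0$; this can be produced by a standard peeling argument using the $\delta^0(T) \ge Ck^2$ hypothesis, since removing a vertex of low residual degree costs at most $O(k^2)$ vertices before the core stabilizes. Inside $T_0$, I would reserve a slightly smaller sub-reservoir $R \subset T_0$ of size $\Omega(k^2)$ to be used as the arena for the actual matching step.

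Next, using the $2k$-connectivity of $T$, I would apply a directed Menger-type argument to produce $2k$ pairwise vertex-disjoint paths: $k$ paths $P_i$ from $x_i$ to an entry point $a_i \in R$ and $k$ paths $P'_i$ from an exit point $b_i \in R$ to $y_i$, chosen so that their internal vertices avoid $R$ entirely. The heart of the proof is then the third step: find $k$ pairwise vertex-disjoint paths $Q_i$ from $a_i$ to $b_i$ inside $R$. I would process the pairs sequentially, each time invoking a short-path lemma (median orders, king vertices, or Havet--Thomass\'{e}-style arguments) to produce $Q_i$ of length $O(k)$ avoiding the previously used vertices. Since each iteration deletes $O(k)$ vertices and $R$ has semidegree $\Omega(k^2)$, after all $k$ iterations the residual semidegree is still $\Omega(k^2) - O(k^2)$, which remains positive for $C$ large enough. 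Concatenating $P_iQ_iP'_i$ yields the desired linkage.

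The main obstacle is the coupling between the Menger step and the internal linking step: the Menger-type routing must deliver the $a_i, b_i$ inside the reservoir $R$ while simultaneously leaving $R$ largely untouched, and the subsequent linking inside $R$ is only possible if $R$'s semidegree survives the preceding deletions. This forces me to prove a version of directed Menger that routes from $\{x_i\}$ (resp.\ to $\{y_i\}$) into a prescribed target set while avoiding a forbidden set of moderate size --- a technical but standard adaptation of the classical theorem for digraphs with high connectivity. A secondary headache is that the $x_i, y_i$ may themselves dominate or be dominated by large portions of $T_0$, forcing an asymmetric pruning of the core before $R$ is chosen; the $\Omega(k^2)$ versus $O(k)$ gap between semidegree and the number of terminals leaves exactly the right amount of slack, which is ultimately why the conjecture requires a quadratic rather than linear $h(k)$.
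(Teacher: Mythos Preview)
The conjecture you are attempting to prove is \emph{false}. As the paper itself records, the authors' earlier work \cite{Zhou(2025)} constructs, for every $k$, a family of $2k$-connected tournaments on $n \geq 14k^2$ vertices with minimum semidegree at least $\lfloor (n-2k)/(2k+2) \rfloor$ that are not $k$-linked. Since this semidegree is unbounded as $n \to \infty$, no choice of $h(k)$ can work, and any purported proof must contain a genuine gap.

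The gap in your outline is the Menger step. From $2k$-connectivity, Menger's theorem yields $2k$ pairwise disjoint directed paths from any prescribed set of $2k$ sources to any prescribed set of $2k$ sinks; it does \emph{not} yield $k$ disjoint paths from $X=\{x_1,\dots,x_k\}$ into $R$ together with $k$ disjoint paths from $R$ into $Y=\{y_1,\dots,y_k\}$, all $2k$ of them mutually disjoint with interiors avoiding $R$. These two families run in opposite directions relative to $R$, and no single invocation of Menger produces them simultaneously. If instead you route $X \to R$ first and then attempt $R \to Y$ in what remains, the first family may be arbitrarily long and can destroy the connectivity required for the second; $2k$-connectivity leaves zero slack for this. (Your third step is also suspect --- linking $a_i$ to the \emph{correct} $b_i$ inside $R$ is again a $k$-linkage problem, not merely a short-path problem --- but the argument has already failed before that point.) This is exactly why the paper's positive result requires $(2k+1)$-connectivity: after deleting $X$ one still has a $(k+1)$-connected digraph, so $k+1$ disjoint paths from the in-dominating set $U$ to $Y\cup\{y_{k+1}\}$ exist, and that single extra path is what powers the entire adjustment procedure. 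With only $2k$-connectivity the margin disappears, and the counterexamples are engineered to exploit precisely that deficit.
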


The authors \cite{Zhou(2025)} constructed a family of \( 2k \)-connected tournaments of order \( n \geq 14k^2 \) with minimum semidegree at least \( \left\lfloor \frac{n - 2k}{2k + 2} \right\rfloor \), which are not \( k \)-linked. {This result, which disproves Conjecture \ref{conj1}, indicates that  connectivity \( g(k) \geq 2k + 1 \) is necessary when the minimum semidegree is a function of \( k \).} Several results related to Conjecture \ref{conj1} have been studied, including those in \cite{Bang(2021), Snyder(2019), Meng(2021), Zhou(2025), Zhou(2023)}. Notably, Gir\~{a}o, Popielarz, and Snyder \cite{Girao(2021)} established the following theorem. 

\begin{theorem}\label{the2}
\cite{Girao(2021)} Every $(2k + 1)$-connected tournament with $\delta^+(T)\geq Ck^{31}$ is $k$-linked for some constant $C$.
\end{theorem}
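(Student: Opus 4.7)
The plan is to reduce the $k$-linkage problem on $T$ to one on a highly connected sub-tournament $W$ whose connectivity is at least $452k$, so that Pokrovskiy's theorem~\cite{Pokrovskiy(2015)} may then be invoked on $W$. The huge minimum out-degree $\delta^+(T) \geq Ck^{31}$ should supply the room needed both to construct $W$ and to route each prescribed terminal into it via disjoint feeder paths.

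First I would use the out-degree condition to extract a large \emph{absorbing hub} $W \subseteq V(T) \setminus \{x_1, \ldots, x_k, y_1, \ldots, y_k\}$ with two properties: every vertex of $T$ has $\Omega(|W|)$ out-neighbours and $\Omega(|W|)$ in-neighbours in $W$, and $W$ is itself $\Omega(k)$-connected with a generous multiplicative constant (at least $454k$, say). Existence of such a hub should follow by a greedy or probabilistic selection: iteratively delete vertices that violate the required in/out-degree contribution, and then trim small separators to upgrade connectivity. The polynomial lower bound on $\delta^+(T)$ gives enough slack to survive this pruning and still leave a substantial set.

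Next, using the global $(2k+1)$-connectivity of $T$ together with the absorbing property of $W$, I would construct, for each $i\in[k]$, an ``in-feeder'' $Q_i$ from $x_i$ to some $w_i^-\in W$ and an ``out-feeder'' $R_i$ from some $w_i^+\in W$ to $y_i$, such that all $2k$ feeders are pairwise internally disjoint, avoid every $x_j,y_j$ with $j\neq i$, are internally disjoint from $W$, and the entry/exit set $\{w_i^\pm : i\in[k]\}$ is chosen with enough freedom that the induced sub-tournament on $W$ minus this set remains $452k$-connected. A Menger-type argument on $T$ combined with double counting through the absorbing property of $W$ should realise this. Pokrovskiy's theorem then $k$-links the $w_i^-$ to the $w_i^+$ inside the residue of $W$, and the feeders glue everything into the desired linkage of the original terminals.

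The main obstacle I expect is this feeder-construction step: the feeders must simultaneously be disjoint from one another, avoid all non-target terminals, and leave $W$ with enough connectivity for Pokrovskiy's theorem to apply. Each of these constraints costs a polynomial-in-$k$ loss in the available ``room'' inside $T$ (the connectivity of $W$, the density of the absorbing property into $W$, and the flexibility with which the entry/exit vertices $w_i^\pm$ can be reassigned). Balancing these losses honestly — in particular, controlling the error in the probabilistic hub extraction while retaining enough freedom to absorb the $(2k+1)$-Menger paths into $W$ without collision — is what would push the required exponent on the minimum out-degree all the way up to the stated $k^{31}$.
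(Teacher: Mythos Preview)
The statement you are attempting to prove is not proved in this paper at all: Theorem~\ref{the2} is quoted from Gir\~{a}o, Popielarz, and Snyder~\cite{Girao(2021)} as background. What the present paper does prove is the much sharper Theorem~\ref{main2}, replacing $Ck^{31}$ by $7k^2+36k$ and extending to semicomplete digraphs; that proof is the only thing available here to compare against.

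Your plan (extract a $452k$-connected absorbing hub $W$, route disjoint feeders from the $x_i$ into $W$ and from $W$ to the $y_i$, then invoke Pokrovskiy inside $W$) is a plausible sketch and is in the spirit of how one might expect the original $k^{31}$ argument to go, but it is \emph{not} what this paper does. The paper avoids Pokrovskiy's theorem entirely. Instead it builds a small set $U$ of $3k$ iteratively chosen \emph{nearly in-dominating} vertices (Definition~\ref{def2}, Lemma~\ref{key1}), uses the $(2k+1)$-connectivity once via Menger to get $k+1$ disjoint $U$-to-$Y\cup\{y_{k+1}\}$ paths, and then runs a bespoke iterative path-adjustment program (procedures (A1)--(A2)) that exploits a single \emph{nearly out-dominating} vertex of a carefully chosen subset $N$ to release vertices one at a time and manufacture a matching from $X_1$ into $N$. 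The final link-up from the $x_i$ side to the $U$ side is by short (length $\leq 3$) paths counted directly. This buys the quadratic bound because every step wastes only $O(k)$ vertices per iteration over $O(k)$ iterations.

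On the correctness of your own sketch: the serious gap is the hub extraction. You assert that greedy or probabilistic pruning will leave a $454k$-connected sub-tournament $W$ into which every vertex of $T$ has many in- and out-neighbours, but $(2k+1)$-connectivity alone gives no two-sided degree control, and large $\delta^+$ says nothing about in-degrees. A vertex $y_i$ could have essentially all of $T$ as out-neighbours and only $2k+1$ in-neighbours; you cannot guarantee it has $\Omega(|W|)$ in-neighbours in $W$, so the out-feeder $R_i$ from $W$ to $y_i$ is not furnished by any absorbing property. This is exactly why the paper's argument treats the $Y$ side via Menger (using the connectivity, not the degree) and reserves the out-degree hypothesis for the $X$ side and the short connecting paths.
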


In the same paper, they also raised the following conjecture.
\begin{conjecture}\label{conj2}
\cite{Girao(2021)} There exists a constant $C>0$ such that every $(2k+1)$-connected tournament with $\delta^+(T)\geq Ck$ is $k$-linked.
\end{conjecture}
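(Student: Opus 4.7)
Given that the authors' earlier construction \cite{Zhou(2025)} already rules out any degree-only strengthening of Conjecture~\ref{conj1} (at connectivity $2k$), and that Conjecture~\ref{conj2} only raises the connectivity by one unit while demanding a merely \emph{linear} out-degree bound, my first instinct is to stress-test the statement by attempting to construct a counterexample rather than attempting a direct proof.

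Concretely, I would look for a blow-up of a small base tournament. Start from a ``cyclic'' base $H$ such as the rotational tournament $R_{2k+1}$ on $\mathbb{Z}_{2k+1}$, in which $i$ beats $i+1,\ldots,i+k$; replace each vertex $v$ by a tournament block $B_v$ of size $s=\Theta(k)$, arbitrarily oriented inside, and direct every arc between distinct blocks according to $H$. Every vertex then has out-degree roughly $ks=\Theta(k^2)$, matching the claimed $\frac{k^2+11k}{26}$ up to the implied constant. Such blow-ups inherit the connectivity of $H$ multiplicatively (every base arc is replaced by $s^2$ parallel arcs), so one can arrange the sizes $|B_v|$ so that the final tournament is exactly $(2k+1)$-connected, fitting the hypothesis of Conjecture~\ref{conj2}. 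I would then pick source--sink pairs $x_1,\ldots,x_k,y_1,\ldots,y_k$ placed in blocks so that any collection of $k$ pairwise vertex-disjoint $x_i$-to-$y_i$ paths is forced to traverse a short cyclic arc of blocks whose total size is strictly less than $k$, producing a bottleneck that cannot accommodate all $k$ paths simultaneously.

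The central obstacle is the three-way tension between $(2k+1)$-connectivity (the hypothesis must be met exactly, not more), block size (which drives the out-degree up and therefore works in favor of the conjecture), and the existence of a genuine linkage obstruction (which becomes easier only when the blocks are small). Fine-tuning the $|B_v|$ and the placement of the $(x_i,y_i)$ pairs to hit the sharp threshold $\frac{k^2+11k}{26}$ is where the real work lies; the awkward constant $26$ hints that the example is not a uniform blow-up but rather has a few distinguished enlarged blocks and several shrunken ones, chosen precisely so that the unavoidable ``linkage cut'' has size $<k$ while every other vertex cut still has size $\geq 2k+1$. If such a construction goes through, it simultaneously refutes Conjecture~\ref{conj2} and shows that any positive result in the $(2k+1)$-connected regime (such as the authors' $\delta^+\geq 7k^2+36k$ bound for semicomplete digraphs) must have at least \emph{quadratic} dependence on $k$, explaining why the paper splits into a disproof of the linear conjecture and a proof of an essentially optimal quadratic one.
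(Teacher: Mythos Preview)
You are right that the conjecture is to be \emph{disproved}, and your intuition that the counterexample must have minimum out-degree of order $k^2$ is correct. However, the blow-up scheme you outline has a structural gap that tuning block sizes will not close. In any blow-up of a base tournament $H$, if every $x_i$--$y_i$ path is forced through a fixed family of blocks $\{B_v:v\in S\}$, then $S$ already separates the base-images of the sources from those of the sinks, and removing $\bigcup_{v\in S}B_v$ disconnects $\{x_1,\ldots,x_k\}$ from $\{y_1,\ldots,y_k\}$ in the blow-up. Thus a ``short cyclic arc of blocks whose total size is strictly less than $k$'' is a genuine vertex cut of size $<k$, contradicting the $(2k+1)$-connectivity you must preserve. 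Blow-ups of strongly connected bases are too homogeneous: the only linkage obstructions they support are vertex cuts, and those are excluded by hypothesis. The tension you identify in your last paragraph is real, but a blow-up cannot resolve it.

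The paper's construction is not a blow-up but a layered gadget built in three stages. A system $\mathcal{P}$ of $k$ disjoint paths of length $l+1\approx k/13$ is laid down, with the internal vertices split into two pools $U_1^+,U_2^+$ satisfying $U_2^+\Rightarrow U_1^+$, and with $U_2^+$ further stratified into transitive slices $H^1,\ldots,H^l$ so that no arc jumps forward by two or more slices. The sources are added as $X=X_1\cup X_2$ with $N^+(x)\cap(V(T)\setminus(X\cup Y))=U_1^+$ for $x\in X_1$ and $=U_2^+$ for $x\in X_2$, and auxiliary sets $U^-,Y^-,Y,Y^+,\{z^-,z^+\}$ with prescribed matchings and transitive orders are attached specifically to push connectivity up to $2k+1$ (Claim~\ref{cla2}). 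The non-linkedness is a \emph{resource-competition} argument, not a bottleneck: any path from $H^0$ to $H^{l+1}$ in the relevant subgraph must meet every slice $H^t$, so $\lfloor k/2\rfloor-6$ such paths already occupy almost all of $U_2^+$, leaving fewer than $\lceil k/2\rceil$ free vertices there to serve as the mandatory second vertices of the paths starting in $X_2$ (Property~\ref{prop1} and Claim~\ref{cla3}). Decoupling this obstruction from any small vertex cut is exactly what forces the elaborate auxiliary machinery and the constant $26$; a blow-up has no mechanism for producing such a decoupling.
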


Bang-Jensen \cite{Bang(1989)} showed that any 5-connected tournament is 2-linked. This implies that when \( k = 2 \), Conjecture \ref{conj2} holds even without additional out-degree {condition}. However, in this paper, we provide a negative answer to Conjecture \ref{conj2} when {$k$} is relatively large.

\begin{theorem}\label{main1}
For every integer $ k \geq 42 $ and integer $ n \geq k^2 $, there exists a family of $(2k+1)$-connected tournaments on $ n $ vertices with minimum out-degree at least $\frac{k^2 + 11k}{26}$ that are not $k$-linked.
\end{theorem}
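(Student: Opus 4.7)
The plan is to construct an explicit family of $(2k+1)$-connected tournaments with the claimed minimum out-degree that fail $k$-linkage, thereby refuting Conjecture~\ref{conj2}. I would build each tournament $T$ by partitioning $V(T)$ into several blocks: a source block containing $X=\{x_1,\ldots,x_k\}$, a sink block containing $Y=\{y_1,\ldots,y_k\}$, a small ``bottleneck'' block $M$ of size $k-1$ that will force the linkage failure, and one or more ``bulk'' blocks of total size $\Theta(k^2)$ whose internal orientation is a near-regular (for instance, rotational) tournament. Edges between distinct blocks are oriented according to a fixed block-cyclic rule, designed so that any directed $x_i\to y_i$ path that avoids the other prescribed vertices in $X\cup Y$ is forced to traverse $M$.

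I would then verify the three claimed properties. The linkage failure follows by pigeonhole: in any purported $k$-linkage, the paths $P_1,\ldots,P_k$ must be pairwise vertex-disjoint, so the internal vertices of $P_i$ avoid $X\cup Y\setminus\{x_i,y_i\}$; by the block-orientation rule this forces each $P_i$ to use at least one vertex of $M$, and $|M|=k-1$ then yields a contradiction. The out-degree bound $\delta^+(T)\ge (k^2+11k)/26$ is a direct per-vertex count once the block sizes are fixed; the denominator $26$ arises as the optimum of an elementary optimization over the block-size parameters subject to the linkage-blocking structure and the integrality constraints. The $(2k+1)$-connectivity is verified by taking an arbitrary $S\subseteq V(T)$ with $|S|\le 2k$ and showing $T-S$ is strongly connected: each bulk block, of size $\Theta(k^2)$ and internally near-regular, remains strongly connected after the loss of at most $2k$ vertices, and the block-cyclic inter-block edges ensure that every surviving non-bulk vertex retains at least one in-neighbor and at least one out-neighbor in the bulk.

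The main obstacle is reconciling the linkage obstruction with the $(2k+1)$-connectivity requirement: the size-$(k-1)$ bottleneck $M$ must not itself be a vertex cut of $T$. This is possible only because in a $k$-linkage the path $P_i$ is forbidden from using any vertex of $X\cup Y\setminus\{x_i,y_i\}$ as an internal vertex, so the effective separator for the pair $(x_i,y_i)$ is $M\cup(X\cup Y\setminus\{x_i,y_i\})$, a set of size $3k-3$, which comfortably exceeds the connectivity threshold $2k$. The technical challenge is carrying out the block-size optimization so that the out-degree bound $(k^2+11k)/26$ is met exactly while maintaining both the linkage-blocking and the high-connectivity properties simultaneously; this optimization is also where the hypothesis $k\ge 42$ arises, as a threshold above which all the required block-size inequalities admit integer solutions and the bulk blocks are large enough to survive the deletion of any $2k$ vertices.
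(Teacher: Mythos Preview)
Your plan hinges on a size-$(k-1)$ bottleneck $M$ with the property that every $(x_i,y_i)$-path avoiding the other $2k-2$ vertices of $X\cup Y$ must meet $M$. This mechanism is incompatible with $(2k+1)$-connectivity as soon as each $x_i$ has an out-neighbour outside $M\cup X\cup Y$. Write $D=T-(M\cup X\cup Y)$ and let $R_i$ be the set of vertices of $D$ reachable in $D$ from $N^+(x_i)\cap V(D)$. Your separator hypothesis forces $N^-(y_i)\cap R_i=\emptyset$. Each $R_i$ is out-closed in the tournament $D$, and any two out-closed subsets of a tournament are nested; if all $R_i$ are nonempty, let $R^*$ be the smallest. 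Then $R^*\subseteq R_j$ for every $j$, whence $y_j\Rightarrow R^*$ for every $j$, so every vertex of $R^*$ has all its out-neighbours inside $R^*\cup M\cup X$. Thus $M\cup X$, a set of size only $2k-1$, separates $R^*$ from $Y$, and $T$ is not $(2k+1)$-connected. The only escape is that some $x_i$ satisfies $N^+(x_i)\subseteq M\cup X\cup Y$, forcing $d^+(x_i)\le 3k-3<(k^2+11k)/26$ for all $k\ge 66$. So the pigeonhole-on-a-bottleneck idea cannot produce the required examples for large $k$; your connectivity sketch (``every surviving non-bulk vertex retains an out-neighbour in the bulk'') misses exactly this, since the real obstruction is reaching $Y$ \emph{from} the bulk after deleting $M\cup X$.

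The paper's construction uses no small cut. It splits $X=X_1\cup X_2$ into halves with $N^+_{T\setminus(X\cup Y)}(X_1)\subseteq U_1^+$ and $N^+_{T\setminus(X\cup Y)}(X_2)\subseteq U_2^+$, where $U_2^+\Rightarrow U_1^+$ and $U_2^+$ is stratified into $l=\lfloor k/13\rfloor$ layers with no forward arc skipping a layer. After tracking how any putative linkage must thread through $\text{Ter}(\mathcal{P})$ and an auxiliary layer $Y^-$, one shows that about $\lfloor k/2\rfloor$ of the paths (those from $X_1$) must each traverse all $l$ layers of $U_2^+$, while the $\lceil k/2\rceil$ paths from $X_2$ each need a private second vertex in $U_2^+$; since $|U_2^+|=\lfloor k/2\rfloor\cdot l$, a count gives the contradiction. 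The constant $26=2\cdot 13$ comes from $|U_s^+|\approx(k/2)(k/13)$, not from a block-size optimisation of the kind you describe. Connectivity is obtained through a large regular sub-tournament $U^-$ acting as a hub, together with auxiliary layers $Y^-,Y^+$ and special vertices $z^-,z^+$; verifying $2k+1$ internally disjoint paths in both directions between $U^-$ and every other vertex is the most laborious part of the argument and depends on the specific matchings among $\text{Ter}(\mathcal{P})$, $Y^-$, $Y$, $Y^+$.
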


Furthermore, we prove Theorem \ref{main2}, which establishes an optimal result in terms of both connectivity and minimum out-degree (up to a multiplicative factor) for semicomplete digraphs. A digraph $D$ is \textbf{semicomplete} if there is at least one arc between every pair of vertices in $D$. Since tournaments are a strict subclass of semicomplete digraphs, Theorem \ref{main2} simultaneously refines and generalizes Theorem \ref{the2}.

\begin{theorem}\label{main2}
Every $(2k+1)$-connected semicomplete digraph $D$ with $\delta^+(D)\geq  7k^2 + 36k$ is $k$-linked.
\end{theorem}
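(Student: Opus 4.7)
The plan is to adapt the iterative path-building framework of Gir\~{a}o, Popielarz, and Snyder (used for Theorem \ref{the2}), but to sharpen every step so that the quadratic out-degree bound $7k^2+36k$ suffices, and simultaneously to extend the argument to semicomplete digraphs rather than just tournaments. A direct reduction to the tournament case by deleting one arc from each digon is too lossy, since such deletions can drop both $\delta^+$ and the connectivity by $\Theta(k)$; instead, I would run the whole argument in the semicomplete setting, noting that the extra reverse arcs only provide additional routing options and never obstruct any tournament-style argument.

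The argument would proceed in two phases. In the \textbf{setup phase}, I reserve, for each terminal $x_i$, a large subset $O_i\subseteq N^+(x_i)$ disjoint from the other terminals and from the sets $O_j$ ($j\neq i$); since $|N^+(x_i)|\geq 7k^2+36k$ and only $O(k^2)$ vertices need to be avoided, each $O_i$ still has size $\Theta(k^2)$. For each $y_i$ I would use the $(2k+1)$-connectivity together with an averaging/Menger argument to build an \emph{in-absorber} $A_i$ of linear size around $y_i$: a set of in-neighbors of $y_i$ such that many vertices of $D$ can be routed into $A_i$ via internally disjoint paths. These absorbers can be chosen pairwise disjoint, at a cost of only $O(k^2)$ vertices, which fits within the out-degree budget. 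In the \textbf{construction phase}, I build the paths $P_1,\ldots,P_k$ one at a time: for $P_i$, choose a starting out-neighbor $w_i\in O_i$ avoiding vertices already used, apply Menger's theorem in the residual semicomplete digraph to route $w_i$ into $A_i$, and then step from $A_i$ to $y_i$. The key numerical check is that at step $i$, at most $O(k^2)$ vertices are forbidden, so the residual digraph is still $(k+1)$-connected by the $(2k+1)$-connectivity assumption, which is more than enough to find the desired path.

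The main obstacle is the middle-of-path routing from $O_i$ to $A_i$: even though both sets are abundant, they may lie on ``opposite sides'' of the semicomplete digraph with respect to any median ordering, so paths between them could be long and might interfere with the absorbers of other pairs. The essential lemma needed is a \emph{robustness} statement for the in-absorbers, asserting that each $A_i$ retains enough usable in-neighbors of $y_i$ even after $O(k)$ vertices are removed globally; iterating this robustness over all $k$ steps forces the lower-order term $36k$ in the out-degree bound, while the quadratic term $7k^2$ comes from the disjointness requirements among the $k$ out-reservoirs $O_i$ and from the margin needed so that after routing $P_1,\ldots,P_{i-1}$ the set $O_i\setminus\bigcup_{j<i}V(P_j)$ is still substantial. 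Verifying this robust version of the absorbers, and checking that every cumulative deletion stays within the slack provided by the two explicit constants, is where the bulk of the technical work lies.
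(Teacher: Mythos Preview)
Your proposal contains a fatal arithmetic error in the connectivity bookkeeping. You write that ``at step $i$, at most $O(k^2)$ vertices are forbidden, so the residual digraph is still $(k+1)$-connected by the $(2k+1)$-connectivity assumption.'' This is false: from a $(2k+1)$-connected digraph you may delete only $k$ vertices and retain $(k+1)$-connectivity; deleting $\Theta(k^2)$ vertices can disconnect it entirely. Moreover, you have no mechanism for bounding the length of the Menger path from $w_i$ to $A_i$, so even the premise that only $O(k^2)$ vertices have been consumed after $i-1$ steps is unjustified --- a single $P_j$ could already use $\Theta(n)$ vertices. The one-path-at-a-time scheme therefore collapses at the very first iteration beyond trivial cases, and the vague ``robustness'' lemma for the absorbers does not address this, since the problem is not that $A_i$ shrinks but that the ambient digraph loses its connectivity.

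The paper's proof avoids this trap by a genuinely different architecture. It applies Menger's theorem \emph{once}, at the outset, to obtain $k+1$ disjoint paths $\mathcal{Q}$ from a carefully chosen set $U$ of $3k$ \emph{nearly in-dominating vertices} (Definition~\ref{def2}, Lemma~\ref{key1}) to $Y\cup\{y_{k+1}\}$, and then never touches the global connectivity again. All remaining linking is done by \emph{short} paths (length at most~$3$), whose existence is guaranteed not by connectivity but by the combinatorial property of $U$: every vertex outside a small exceptional set reaches each $u_i\in U$ by many internally disjoint $2$-paths. The $7k^2$ term arises from an iterative path-adjustment program (procedures (A1)--(A2)) that repeatedly exploits a nearly out-dominating vertex inside $N=\bigcup N_{x_i}$ to release one new matched vertex per round; this is where the quadratic budget is actually spent. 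Nothing in your outline corresponds to the nearly-dominating machinery or to the single-Menger-plus-short-linking structure, and without some device that forces all auxiliary paths to be short, a $(2k+1)$-connectivity hypothesis cannot survive the construction.
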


The minimum out-degree bound in Theorem \ref{main2} is substantially lower than \( Ck^{31} \) required in Theorem \ref{the2}. {In the proof of Theorem \ref{main2}, a novel strategy is employed. A powerful tool in the proof of Theorem \ref{main2} is the nearly in-dominating set we proposed (Definition \ref{def2}), which acts as a `connector' linking two disjoint path systems. The first path system is derived using Menger's Theorem. In contrast, to generate the second path system, we present a innovative iterative path adjustment program that releases some vertices {from the first path system.} This program relies on our key discovery: every tournament possesses a nearly out-dominating vertex (Lemma \ref{key1}). When combined with a sufficiently large out-degree, this iterative adjustment process enables the release of enough vertices to construct the second path system.}

The rest of the paper is organized as follows. Section 2 provides the notation used throughout the article. Section 3 presents the counterexamples in Theorem \ref{main1}. In Section 4, we first introduce some fundamental tools and definitions, and then proceed to the proof of Theorem \ref{main2}. 

\section{Basic terminology}
Notation not introduced here is consistent with \cite{Bang(2009)}. Let $\mathbb{N}$ be the set of natural numbers. For an integer $i$, we use the notation  $\boldsymbol{[i]} = \{1, \ldots, i\}$, and $\boldsymbol{[i, i+j]} = \{i, i+1, \ldots, i+j\}$. The digraphs considered in this paper are always simple, i.e., without loops and multiple arcs. Let $D$ be a digraph with vertex set $V(D)$ and arc set $A(D)$. We use $\boldsymbol{|D|}$ to represent the number of vertices in $D$. For two vertices $x, y \in V(D)$, we denote the arc from $x$ to $y$ as $xy$. For a vertex $x$ of $D$, we define $\boldsymbol{N^+_D(x)} = \{y \mid xy \in A(D)\}$ (resp. $\boldsymbol{N^-_D(x)} = \{y \mid yx \in A(D)\}$) as the \textbf{out-neighborhood} (resp. \textbf{in-neighborhood}) of $x$, and $\boldsymbol{d^+_D(x)} = |N^+_D(x)|$ (resp. $\boldsymbol{d^-_D(x)} = |N^-_D(x)|$) as the \textbf{out-degree} (resp. \textbf{in-degree}) of $x$.  
For a set $X \subseteq V(D)$, the subgraph of $D$ induced by $X$ is denoted by $\boldsymbol{D\langle X \rangle}$, and the digraph obtained from $D$ by deleting $X$ and all arcs incident with $X$ is denoted by $\boldsymbol{D \setminus X}$. For a digraph $D$ and sets $A, B \subseteq V(D)$, we use the notation \(\boldsymbol{A \Rightarrow B}\) if and only if for every \(a \in A\) and \(b \in B\), there is an arc \(a b\). In particular, $\boldsymbol{x}$ \textbf{dominates} $\boldsymbol{y}$ if $x \rightarrow y$. 

A \textbf{matching} in a digraph $D=(V,A)$ is a set of arcs $M=\{u_1v_1,u_2v_2,\ldots{},u_rv_r\}\subset A$  such that no two arcs in $M$ have a common vertex. We say that $M$ is a \textbf{perfect matching} from $\{u_1,u_2,\ldots ,u_r\}$ to $\{v_1,v_2,\ldots ,v_r\}$. Suppose below that $P = x_1x_2 \cdots x_t$ is a directed path of $D$. We say that  $x_1$ (resp. $x_t$) is  the \textbf{initial} (resp. \textbf{terminal}) vertex of $P$ and that $P$ is an $(x_1,x_t)$-path. The \textbf{length} of $P$ is the number of arcs, and we denote a path of length $l$ as an \textbf{$\boldsymbol{l}$-path}. We denote by  \textbf{$\boldsymbol{P[x, y]}$} the subpath of $P$ from $x$ to $y$. Furthermore, $P$ is \textbf{minimal} if, for every $(x_1, x_t)$-path $Q$, either $V(P) = V(Q)$ or $V(Q)$ is not properly contained in $V(P)$. Suppose that \(Q = x_t x_{t + 1}\cdots x_m\) is a path in \(D\) that is internally disjoint from $P$, and we denote the \textbf{concatenation} of \(P\) and \(Q\) by \(\boldsymbol{P \circ Q}\), which is the directed path $x_1x_2\cdots x_tx_{t+1}\cdots x_m$. Given a collection of paths $\mathcal{Q}$, we use $\boldsymbol{\text{\textbf{Init}} (\mathcal{Q})}$ to represent the set of all initial vertices of paths in $\mathcal{Q}$, $\boldsymbol{\text{\textbf{Ter}} (\mathcal{Q})}$ to represent the set of all terminal vertices of paths in $\mathcal{Q}$, and $\boldsymbol{\text{\textbf{Int}}(\mathcal{Q})} = V(\mathcal{Q}) \setminus (\text{Ini} (\mathcal{Q}) \cup \text{Ter} (\mathcal{Q}))$. Also, a \(\boldsymbol{(u,X)}\)-path is a path that starts at vertex \(u\) and ends at a vertex in the set \(X\). Hereafter, \textbf{disjoint paths} denote vertex-disjoint paths. 

We say that a digraph is \textbf{regular} if all vertices have the same in-degree and out-degree. The unique acyclic tournament of order \( n \) is the \textbf{transitive tournament}, denoted by \( TT_n \). \textbf{A transitive tournament with vertex order} \( (v_1, v_2, \ldots, v_n) \) is a tournament such that for all \( 1 \leq i < j \leq n \), there is an arc from \( v_i \) to \( v_j \). A digraph is called an \textbf{oriented graph} if it has no pair of arcs of the form $xy$, $yx$. A \textbf{bipartite tournament} \( T[U,W] \) is defined on \( V=U\cup W \) such that for every pair of vertices \( u \in U \) and \( w \in W \), exactly one of the arcs \( xy \) or \( yx \) exists in \( T[U,W]  \), and no arcs exist within \( U \) or within \( W \).

\section{Counterexamples}

To construct the counterexamples in Theorem \ref{main1}, we begin by introducing a key fact.

\begin{fact}\label{fac1}
\cite{Li(2008)} Every regular tournament of order $n$ is $\lfloor n/3 \rfloor$-connected.
\end{fact}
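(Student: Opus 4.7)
The plan is a short counting argument that exploits regularity on both sides of a hypothetical separator. Let $T$ be a regular tournament of order $n$; regularity forces $n$ to be odd and every vertex to have out-degree and in-degree exactly $(n-1)/2$. Take any vertex cut $S \subseteq V(T)$ of size $s$. Since $T \setminus S$ is not strongly connected, I can partition $V(T) \setminus S$ into non-empty parts $A$ and $B$ with $|A| = a$ and $|B| = b$ so that every arc between $A$ and $B$ is directed from $A$ to $B$. The goal is to show $s \geq n/3$, which immediately yields $s \geq \lfloor n/3 \rfloor$.

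The heart of the argument is to double-count arcs leaving $A$ and arcs entering $B$. Summing out-degrees over $A$, I split the total into arcs inside $A$ (exactly $\binom{a}{2}$, since the induced sub-tournament has that many arcs), arcs from $A$ to $B$ (exactly $ab$, because all arcs between $A$ and $B$ go that way), and arcs from $A$ to $S$. Rearranging with the identity $a + b + s = n$ gives
\[
e(A, S) \;=\; \frac{a(s - b)}{2}.
\]
The symmetric computation on in-degrees of $B$ yields
\[
e(S, B) \;=\; \frac{b(s - a)}{2}.
\]

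Both arc counts are non-negative and $a, b \geq 1$, so $s \geq b$ and $s \geq a$. Therefore $s \geq \max(a,b) \geq (a+b)/2 = (n-s)/2$, which rearranges to $3s \geq n$ and hence $s \geq \lceil n/3 \rceil \geq \lfloor n/3 \rfloor$, as required.

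The only subtlety I anticipate is the necessity of running both countings: the out-side count alone bounds $s$ in terms of $b$, the in-side count alone bounds $s$ in terms of $a$, and neither is enough by itself to force $s \geq n/3$. It is precisely regularity on \emph{both} sides of the cut that ties the two inequalities together and delivers the desired bound; omitting either half collapses the argument.
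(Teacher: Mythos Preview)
Your argument is correct. The paper itself does not prove Fact~\ref{fac1}: it is simply quoted from Lichiardopol~\cite{Li(2008)} and used as a black box in the verification that the counterexample tournament is $(2k+1)$-connected. Your double-counting proof is therefore not competing with any argument in the paper, but rather supplying a self-contained justification that the paper omits.

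For completeness, your derivation checks out line by line: summing out-degrees over $A$ gives $a(n-1)/2 = \binom{a}{2} + ab + e(A,S)$, which with $a+b+s=n$ rearranges to $e(A,S) = a(s-b)/2 \geq 0$, forcing $s \geq b$; the symmetric in-degree count over $B$ forces $s \geq a$; combining yields $3s \geq n$. Your closing remark about needing \emph{both} regularity conditions is also on point---a tournament that is only out-regular or only in-regular can have connectivity far below $n/3$.
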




\begin{proof}\textbf{of Theorem \ref{main1}.} Let \( k\geq 42 \) be a positive integer, and let $l=\lfloor k/13 \rfloor$. 
The counterexample tournament $T$ is constructed in three stages. First, we build a foundational oriented graph \( G \) whose structure is designed to avoid \( k \)-linkedness. Next, \( G \) is expanded into a tournament \( T_1 \) by adding extra vertices and completing all non-adjacent vertex pairs with arcs. This step facilitates achieving \((2k+1)\)-connectivity while preserving the critical structural properties. Finally, the tournament \( T \) is constructed by augmenting \( T_1 \) with a vertex set \( X \) and a vertex \( z^+ \). Throughout this process, the structure of \( G \) is leveraged to disrupt \( k \)-linkedness, while the \((2k + 1)\)-connectivity is achieved.  

To construct the oriented graph \( G \), we first design a path system \(\mathcal{P} = \{P_1, P_2, \ldots, P_k\}\) consisting of \(k\) disjoint paths, and each path is of the form \(P_i: z_i^0 \to z_i^1 \to \cdots \to z_i^{l} \to z_i^{l+1}\). For each \( t \in [0, l+1] \), define \( H^t = \{z_1^t, z_2^t, \ldots, z_{\lfloor k/2 \rfloor}^t\} \), ensuring \( |H^t| = \lfloor k/2 \rfloor \). Let \( U_2^+ = \bigcup^{\lfloor k/2 \rfloor}_{i=1} \text{Int}(P_i) \) and \( U_1^+ = \bigcup_{i=\lfloor k/2 \rfloor+1}^{k} \text{Int}(P_i) \), resulting in \( |U_1^+|, |U_2^+| \geq \lfloor \frac{k}{2} \rfloor \cdot l \geq (\frac{k}{2}-1)(\frac{k}{13}-1)$ $\geq \frac{k^2-15k+26}{26} \). To disrupt the \(k\)-linkedness, we augment \( \mathcal{P} \) by adding arcs such that the following holds (as shown in Fig. \ref{fig1}).
\begin{itemize}[itemsep=0pt, topsep=3pt, parsep=2pt]
    \item[(A1)] For each \( t \in [l] \), \( H^t \) induces a transitive tournament with vertex order \( (z_1^t, z_2^t, \ldots, z_{\lfloor k/2 \rfloor}^t) \).
     \item[(A2)]\( U_2^+ \) induces a tournament, where all arcs not in \(\mathcal{P}\) and not entirely within some \( H^j \) are directed from \( H^t \) to \( H^j \) for \( 1 \leq j < t \leq l \) (i.e., there is no arc from $H^j$ to $H^t$ with $t\geq j+2$).
    \item[(A3)] \( U_1^+ \) induces a tournament and \( U_2^+ \Rightarrow U_1^+ \) (i.e., $ U_2^+ \cup U_1^+$ induces a tournament).
    \item[(A4)] \( \text{Ter}(\mathcal{P}) \) forms a transitive tournament with vertex order \( (z^{l+1}_1, \ldots, z^{l+1}_k) \). The set \( U_1^+ \cup U_2^+ \cup \text{Ter}(\mathcal{P}) \) induces a tournament, where all arcs between \( U_1^+ \cup U_2^+ \) and \( \text{Ter}(\mathcal{P}) \) that are not in \( \mathcal{P} \) are directed from \( \text{Ter}(\mathcal{P}) \) to \( U_1^+ \cup U_2^+ \). Meanwhile, the subgraph \( G\langle U_1^+ \cup U_2^+, \text{Init}(\mathcal{P})  \rangle\) forms a bipartite tournament where all non-\(\mathcal{P}\) arcs directed from \( U_1^+ \cup U_2^+ \) to \( \text{Init}(\mathcal{P}) \). Also, \( \text{Ter}(\mathcal{P}) \Rightarrow \text{Init}(\mathcal{P}) \).
\end{itemize}
Notice that if \( G\langle\text{Init}(\mathcal{P}) \rangle \) can be made a tournament by adding arcs, then \( G \) is a tournament. 

\begin{figure}[H]
\centering    
   \includegraphics[scale=0.45]{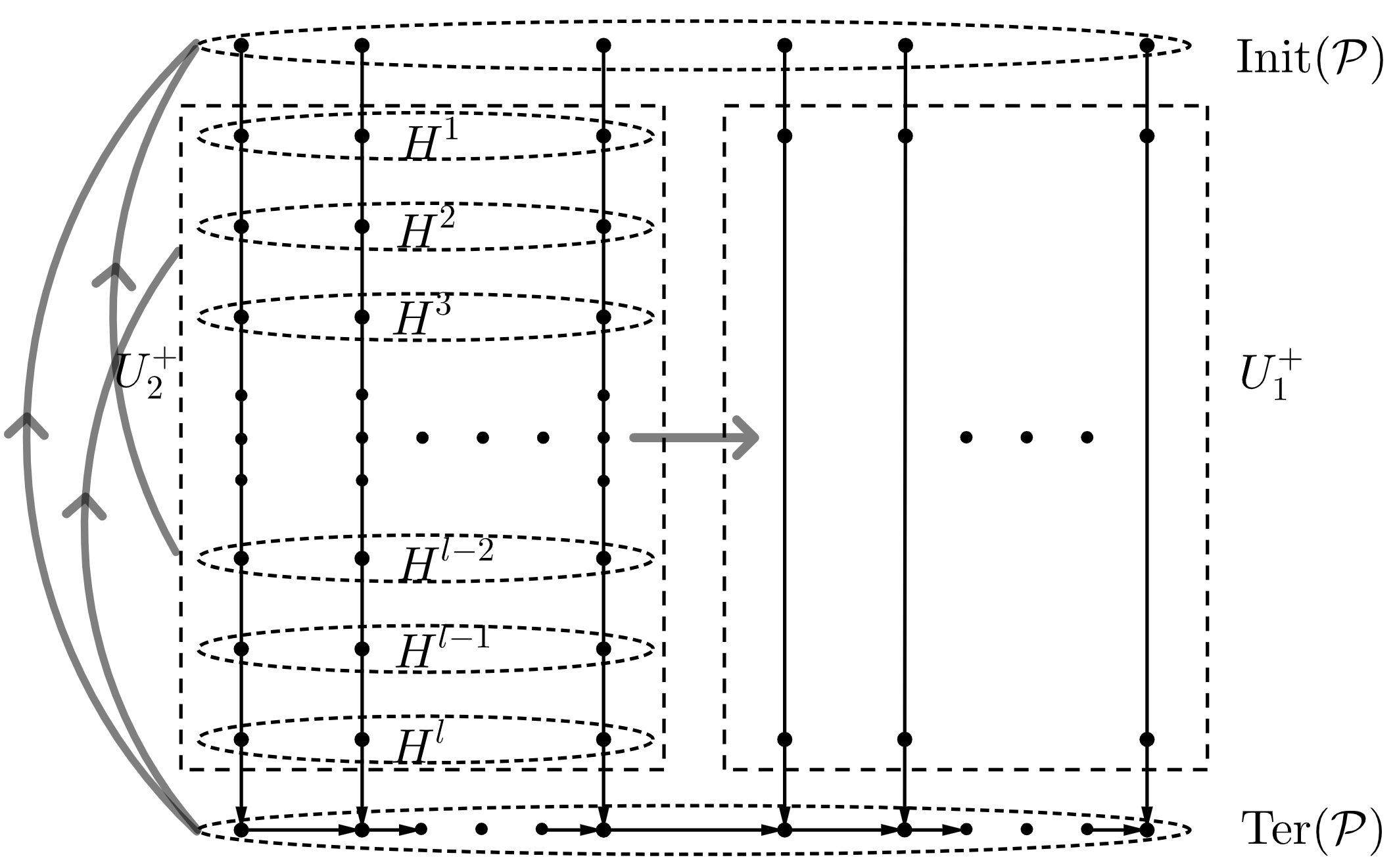}   
\caption{The structure of oriented graph \(G\). }
\label{fig1}
\end{figure}
\begin{property}\label{prop1}
There is no collection $\mathcal{R}_1$ of $\lfloor k/2 \rfloor-6$ disjoint paths from $H^0 $ to $H^{l+1}$ in $G\setminus \{z^{l+1}_{\lfloor k/2\rfloor+1},\ldots,z^{l+1}_{k}\}$, and a set $X^+_2\subseteq U_2^+$ of order $\lceil k/2 \rceil$ such that $V(\mathcal{R}_1)\cap X^+_2=\emptyset$.
 \end{property}
 \begin{proof}
Conversely, assume that such a collection $\mathcal{R}_1$ and set $X^+_2 \subseteq U_2^+$ exist. According to constructions (A2) and (A4), there is no arc from \(H^i\) to \(H^j\) if \(j \geq i + 2\) and $i, j \in [0,l+1]$. Since $U^+_2 \Rightarrow U^+_1$, by our construction, any path from \(H^0\) to \(H^{l+1}\) in $G\setminus \{z^{l+1}_{\lfloor k/2\rfloor+1},\ldots,z^{l+1}_{k}\}$ must intersect each of the sets \(H^1, \ldots, H^{l}\). So the path must intersect \(U^+_2\) at least \(l\) vertices, i.e., $|V(\mathcal{R}_1)\cap U^+_2|\geq (\lfloor k/2 \rfloor-6)\cdot l$. Because $|U^+_2|=\lfloor k/2 \rfloor \cdot l$, it follows that $|U^+_2\setminus V(\mathcal{R}_1)|\leq 6\lfloor \frac{k}{13} \rfloor<\lceil k/2 \rceil$, which contradicts the existence of the set \(X^+_2\).
 \end{proof}

 To construct the tournament \( T_1 \) from the oriented graph \( G \), we extend it as follows. The vertex set of \( T_1 \) is \( V(T_1) = V(G) \cup U^- \cup Y^- \cup Y \cup Y^+ \), where
\begin{itemize}[itemsep=0pt, topsep=3pt,parsep=2pt]
    \item the set \( U^- \) induces a regular tournament of order at least \( k^2/2\) with \( U^- \cap V(G) = \text{Init}(\mathcal{P}) \).
    \item \( Y^- = \{y_1^-, y_2^-, \ldots, y_k^-\} \), \( Y = \{y_1, y_2, \ldots, y_k\} \), and \( Y^+ = \{y_1^+, y_2^+, \ldots, y_k^+\} \). Specifically, \( Y^- \) induces a transitive tournament with vertex order \( (y_1^-, y_2^-, \ldots, y_k^-) \); \( Y \) induces a transitive tournament with vertex order \( (y_1, y_2, \ldots, y_k) \); and \( Y^+ \) induces a transitive tournament with vertex order \( (y_k^+, y_{k - 1}^+, \ldots, y_1^+) \) (note that the ordering of \( Y^+ \) is the reverse of the orderings of $H^t$ with $t\in [l]$, \( Y \), and \( Y^-\)).
\end{itemize}
\begin{figure}[H]
\centering    
   \includegraphics[scale=0.45]{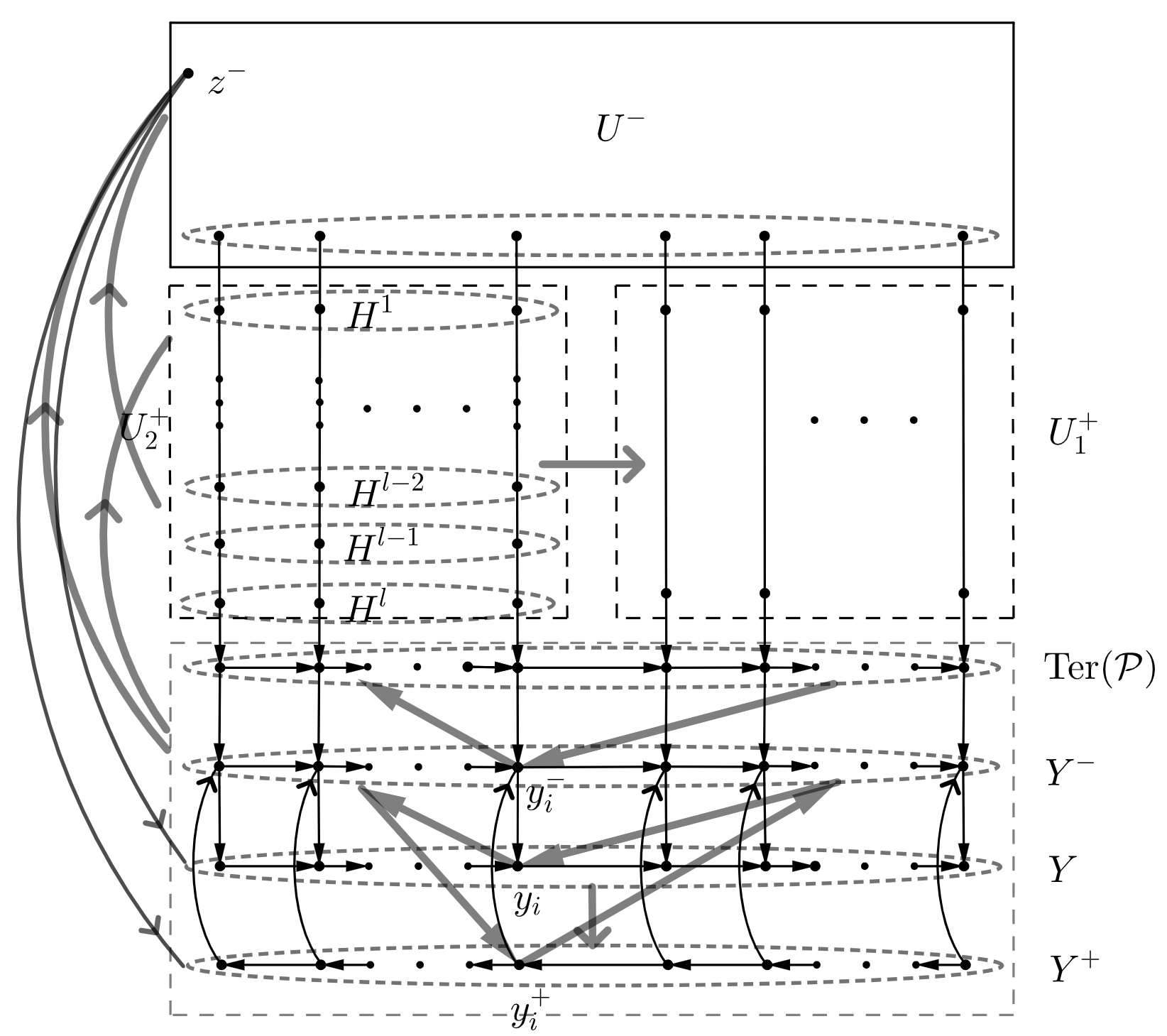}   
\caption{The structure of tournament \(T_1\). }
\label{fig2}
\end{figure}

 To ensure high connectivity, we add arcs that meet the following requirements (as shown in Fig. \ref{fig2}).
\begin{itemize}[itemsep=0pt, topsep=3pt,parsep=2pt]
\item[(B1)] $V(G)\cup U^-$ induces a tournament, where all arcs not in  $\mathcal{P}$ are from \(V(G) \setminus \text{Init}(\mathcal{P})$ to $ U^- \).
    \item[(B2)] The arc set between $Y^-$ and $\text{Ter}(\mathcal{P})$ is \( \{z^{l+1}_j y_i^- \mid j \geq i, \ i\in [k]\} \cup \{y_i^- z^{l+1}_j \mid j < i, \ i\in [k]\} \). Let \( E_1 = \{z^{l+1}_i y_i^- \mid i \in [k]\} \) be the perfect matching from $ \text{Ter}(\mathcal{P})$ to $ Y^-$.
    \item[(B3)] The arc set  between $Y^-$ and $Y$ is \( \{y_j^- y_i \mid j \geq i,\  i\in [k]\} \cup \{y_i y_j^- \mid j < i,\ i\in [k]\} \). Let \( E_2 = \{y_i^- y_i \mid i \in [k]\} \) be the perfect matching from $Y^-$ to $ Y$.
    \item[(B4)] The arc set between $Y^-$ and $Y^+$ is \( \{y_j^- y_i^+ \mid j < i,\ i\in [k]\}\cup \{y_i^+ y_j^- \mid j \geq i,\ i\in [k]\} \) (the direction of arcs in (B4) is opposite to those in (B2) and (B3)).
    \item[(B5)] Choose \( z^- \) to be a vertex in \( U^-\setminus \text{Init}(\mathcal{P}) \), then let \( \{z^-\} \Rightarrow Y\cup Y^+ \).
    \item[(B6)] \( Y \Rightarrow Y^+ \), \( Y \cup Y^+ \Rightarrow V(\mathcal{P}) \cup (U^- \setminus \{z^-\})\), and \( Y^- \Rightarrow \text{Int}(\mathcal{P}) \cup U^- \).
\end{itemize}

It is evident that \( T_1 \) is a tournament. We construct the tournament \( T = (V(T), A(T)) \) based on \( T_1 \), where \( V(T) = V(T_1)\cup  X \cup \{z^+\} \) and \( X = \{x_1, x_2, \ldots, x_k\} \). Partition \( X \) into \( X_1 = \{x_1, \ldots, x_{\lfloor k/2 \rfloor}\} \) and \( X_2 = \{x_{\lfloor k/2 \rfloor + 1}, \ldots, x_k\} \). Then define the arcs to ensure the following conditions hold (as shown in Fig. \ref{fig3}).
\begin{itemize}[itemsep=0pt, topsep=3pt,parsep=2pt]
\item[(C1)] For each \( x_i \in X_1 \), \( N^+_{T \setminus (X \cup Y)}(x_i )= U_1^+ \); for each \( x_i \in X_2 \), $N^+_{T \setminus (X \cup Y)}(x_i )= U_2^+ $. This implies that \( V(T) \setminus (X\cup Y \cup U_1^+) \Rightarrow X_1 \) and \( V(T) \setminus (X\cup Y \cup U_2^+) \Rightarrow X_2 \).  Also $T\langle X \rangle$ is a tournament.
    \item[(C2)] All arcs are directed from \( X \) to \( Y \) except for the arcs in \(\{ y_i x_i\mid \ i \in [k]\} \).
    \item[(C3)] \( U^- \Rightarrow \{z^+\} \Rightarrow V(T) \setminus (U^- \cup \{z^+\})\).
\end{itemize}

The following property can easily be deduced from the construction of $T$.

\begin{figure}[H]
\centering    
   \includegraphics[scale=0.42]{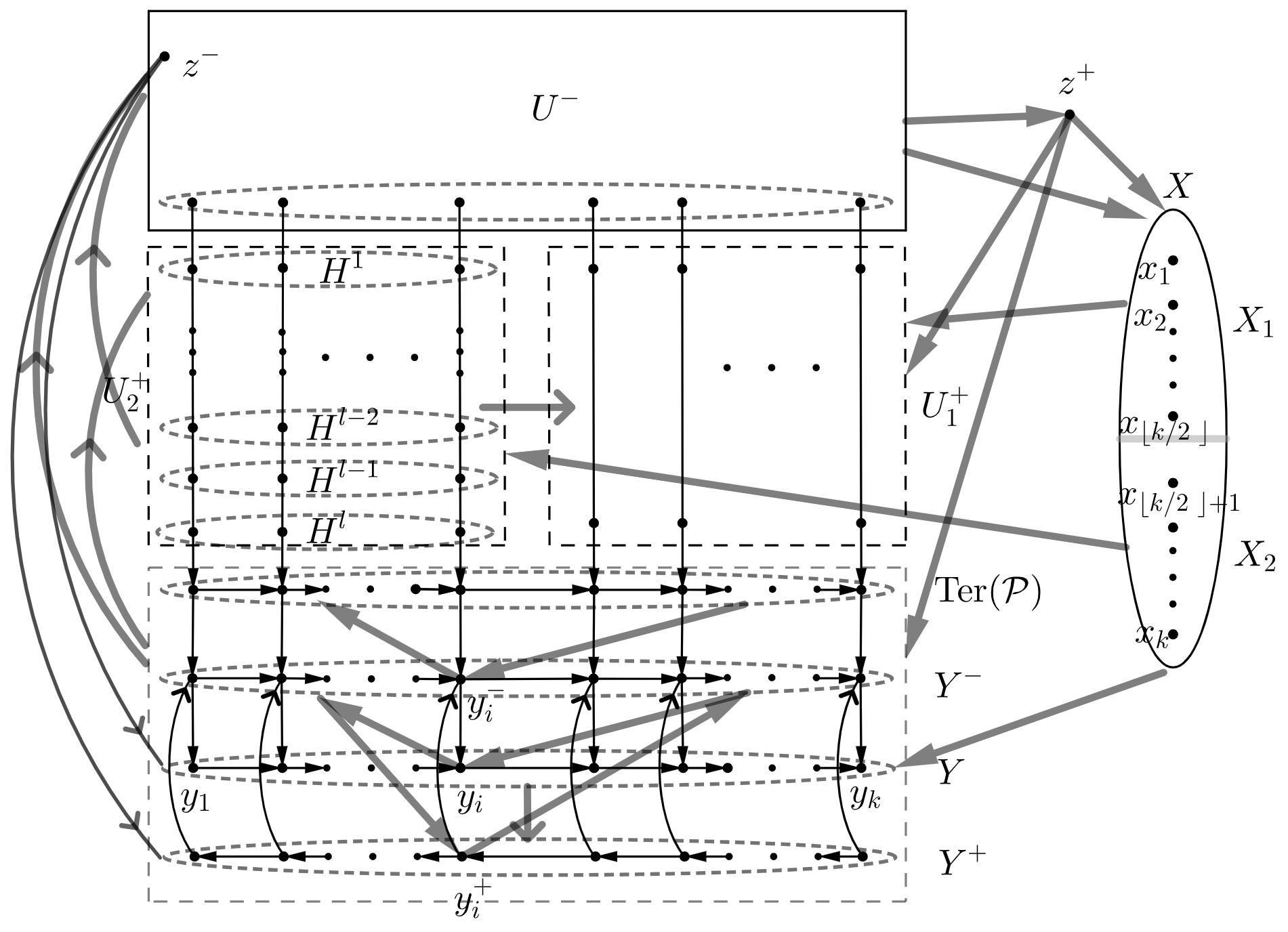}   
\caption{The structure of tournament \(T\). }
\label{fig3}
\end{figure}
\begin{property}\label{prop2}
There exists a collection $\mathcal{P}'$ of $k + 1$ disjoint paths from $U^-$ to $Y \cup \{z^+\}$ in the subdigraph $T \setminus (V(\mathcal{P}) \cup Y^-\cup \{z^-\})$.
\end{property}
\begin{proof}
This is verified by $U^- \Rightarrow X \cup \{z^+\}$ and the fact that there is a perfect matching from $X$ to $Y$ by (C2).
\end{proof}

Now, we verify that \( T \) is a counterexample by showing Claims \ref{cla1}-\ref{cla3}.
\begin{claim}\label{cla1}
\( T \) satisfies \( \delta^+(T) \geq  \frac{k^2+11k}{26}\).
\end{claim}

\begin{proof}
To this end, we analyze the out-degrees of vertices in \(U^-\), \(X\), \( \{z^+\}$ and \(Y\cup Y^+\cup Y^-\cup(V(\mathcal{P})\setminus\text{Init}(\mathcal{P}))\), respectively. Since \( U^- \) induces a regular tournament of order at least $k^2/2$, each vertex in \( U^- \) has out-degree at least \(  \frac{|U^-|-1}{2}\geq \frac{k^2-2}{4} \geq \frac{k^2+11k}{26}  \) (as $k\geq 3$). By (C1)-(C2), each vertex \( x_i \in X \) has out-degree at least 
$$ \min \{ |U_1^+|, |U_2^+|\} +|Y\setminus \{y_i\}| \geq \frac{k^2-15k+26}{26}+k-1=\frac{k^2+11k}{26}.$$ 
Condition (C3) implies that $\{z^+\}\Rightarrow U_1^+\cup U_2^+$, hence $z^+$ has out-degree at least \( \frac{k^2-15k+26}{13}\geq \frac{k^2+11k}{26} \) (as $k\geq 41$). For each vertex \(v\in Y\cup Y^+\cup Y^-\cup V(\mathcal{P})\setminus\text{Init}(\mathcal{P})\), by (B1) and (B6), $v$ has at least $|U^-|-1$ out-neighbours in $U^-$, which is at least \(k^2/2-1\geq \frac{k^2+11k}{26}\). Thus  \(\delta^+(T)\geq \frac{k^2+11k}{26} \), establishing the claim.
\end{proof}

\begin{claim}\label{cla2}
\( T \) is \( (2k+1) \)-connected.
\end{claim}
\begin{proof}
Since \( T\langle U^- \rangle \) is a regular tournament of order at least \( k^2/2 \), Fact~\ref{fac1} implies it is \( \lfloor k^2/3 \rfloor \)-connected. As \( \lfloor k^2/3 \rfloor \geq 2k + 1 \) for \( k \geq 7 \), \( T\langle U^- \rangle \) is \( (2k + 1) \)-connected. To establish that \( T \) is \( (2k + 1) \)-connected, it suffices to show that for any vertex \( v \in V(T) \setminus U^- \), there are \( 2k + 1 \) internally disjoint $(v, U^- )$-paths intersecting only at \( v \), and \( 2k + 1 \) internally disjoint \(( U^-, v )\)-paths intersecting only at \( v \). 

By (B1) and (B6), for each vertex \( v \in (V(\mathcal{P}) \setminus \mathrm{Init}(\mathcal{P})) \cup Y^- \cup Y \cup Y^+ \), there are \(2k+1\) internally disjoint (\(v\), \(U^-\))-paths intersecting only at \(v\). Furthermore, (C1)-(C2) guarantee that each \(x_i \in X\) has \(\frac{k^2 + 11k}{26}\) out-neighbours in \(\mathrm{Int}(\mathcal{P}) \cup Y\). Combined with (B1) and $Y\Rightarrow U^-$, this yields \(\frac{k^2 + 11k}{26} \geq 2k+1\) (valid for \(k \geq 42\)) internally disjoint (\(x_i\), \(U^-\))-paths intersecting only at \(x_i\). Since \( \{z^+\} \Rightarrow Y^- \cup Y\cup Y^+ \Rightarrow U^-\), there have \(2k+1\) internally disjoint (\(z^+\), \(U^-\))-paths intersecting only at \(z^+\). 

Next, we claim that for any vertex \(v \in V(T)\setminus U^-\), there are \(2k+1\) internally disjoint (\(U^-\), \(v\))-paths that only intersect at \(v\) in $T$; for brevity, we subsequently denote this property as \((U^-, v, 2k + 1)\) for the vertex \(v\). The fact \( U^- \Rightarrow X \cup \{z^+\} \) directly establishes the property \( (U^-, v, 2k + 1) \) for all \( v \in X \cup \{z^+\} \). For any vertex \(v \in \text{Int}(\mathcal{P})\), there exist \(k\) disjoint \((\text{Init}(\mathcal{P}), v)\)-paths in \(T\langle V(G) \cup Y^- \rangle\) because there are \(k\) disjoint paths from \(\text{Init}(\mathcal{P})\) to \(Y^-\) through \(\mathcal{P} \circ E_1\) and \(Y^-\Rightarrow \text{Int}(\mathcal{P})\). Then according to Property \ref{prop2} and \( Y \cup \{z^+\} \Rightarrow \text{Int}(\mathcal{P}) \), the property \( (U^-, v, 2k + 1) \) holds for any vertex \( v \in \text{Int}(\mathcal{P}) \). For each terminal vertex \( z^{l+1}_j \in \mathrm{Ter}(\mathcal{P}) \), observe that \( z^{l+1}_i \to z^{l+1}_j \) for \( i < j \) (by the transitivity of \( \text{Ter}(\mathcal{P}) \)) and \( y^-_i \to z^{l+1}_j \) for \( j < i \) (by (B2)). This yields \( k \) desired paths from \( \mathrm{Init}(\mathcal{P}) \) to \( z^{l+1}_j \) through \( \mathcal{P} \circ E_1 \). Also, by Property \ref{prop2} and \( Y \cup \{z^+\} \Rightarrow \mathrm{Ter}(\mathcal{P}) \), the property \( (U^-, z^{l+1}_j, 2k + 1) \) is established.

We now proceed to verify the property \( (U^-, z^{l+1}_j, 2k + 1) \) for each vertex in \( Y^- \), \( Y \), and \( Y^+ \), respectively. The structure of \( T \) ensures \( k \) paths from \( \text{Init}(\mathcal{P}) \) to \( Y^- \) via \( \mathcal{P} \circ E_1 \). Specifically, for each vertex \( y_i^- \in Y^- \),  \( z_j^{l+1} \to y_i^- \) for \( j \geq i \) (by (B2)) provides \( k - i + 1 \) internally disjoint $(\text{Init}(\mathcal{P}), y_i^-)$-paths, while \( y_j^- \to y_i^- \) for \( j < i \) (by the transitivity of \( Y^- \)) supplies \( i - 1 \) desired paths. Thus, combining these paths yields the required \( k \) internally disjoint \((U^-, y_i^-)\)-paths. Additionally, Property \ref{prop2} guarantees a collection \(\mathcal{P}'\) of \( k + 1 \) paths from \( U^-  \) to \( Y \cup \{z^+\} \) in \( T \setminus (V(\mathcal{P}) \cup Y^-\cup \{z^-\}) \). From (B3), the arcs \( \{y_j \to y_i^- \mid i < j\} \) contribute \( k - i \) desired paths using vertices \( \{y_j \mid i < j\} \) to \( y_i^- \). By (B4) and (B6), we have \( \{y_j, i \geq j\} \Rightarrow \{y_j^+, i \geq j\} \Rightarrow y_i^- \), from which we can derive \( i \) desired paths through \( \{y_j^+ \mid i \geq j\} \) to \( y_i^- \). Furthermore, there is a path from a vertex in \( U^- \) to \( z^+ \) via \(\mathcal{P}'\), and since \( z^+ \rightarrow y_i^- \), this provides another path from \( U^- \) to \( y_i^- \). Combining these paths with the previous ones establishes the property \( (U^-, y_i^-, 2k + 1) \).

 For each vertex \( y_i \in Y \), transitivity of \( Y \) gives \( y_j \to y_i \) for all \( j < i \), and (B3) provides \( y_j^- \to y_i \) for \( j \geq i \). Following these paths in \( \mathcal{P} \circ E_1 \circ E_2 \), we obtain \( k \) such paths from \( \mathrm{Init}(\mathcal{P}) \) to \( y_i \). Additionally, \( U^- \Rightarrow (X \setminus \{x_i\}) \cup \{z^+\} \Rightarrow y_i \) ensures \( k \) paths from \( U^- \setminus V(\mathcal{P}) \) to \( y_i \). Together with (B5), this establishes \( (U^-, y_i, 2k + 1) \).

Finally, for each vertex \( y_i^+ \in Y^+ \), the transitivity of \( Y^+ \) ensures that \( y_j^+ \to y_i^+ \) for all \( j > i \). Additionally, (B4) guarantees that \( y_j^- \to y_i^+ \) for all \( j < i \). Moreover, (B4) provides a perfect matching from \( \{ y_j^- \mid j \in [i, k-1] \} \) to \( \{ y_j^+ \mid j \in [i+1, k] \} \). By following the paths in \( \mathcal{P} \circ E_1 \), we obtain \( k - 1 \) paths from \( \mathrm{Init}(\mathcal{P}) \) to \( y_i^+ \). Combined with (B5), Property \ref{prop2}, and considering that \( Y \cup \{ z^+ \} \Rightarrow Y^+ \), we establish that \( y_i^+ \) satisfies the property \( (U^-, y_i^+, 2k + 1) \). In summary, the property \( (U^-, v, 2k + 1) \) holds for every vertex \( v \in V(T) \), thereby completing the proof.
\end{proof}

\begin{claim}\label{cla3}
\( T \) contains no \( k \) disjoint \( (x_i, y_i) \)-paths for \( i \in [k] \).
\end{claim}
\begin{proof}
We use contradiction. Assume \( k \) disjoint \((x_i, y_i)\)-paths exist in \( T \), and let \(\mathcal{Q} = \{Q_1, \ldots, Q_k\}\) denote these paths. At most two paths in \(\mathcal{Q}\), labeled \(Q_a\) and \(Q_b\), contain \(z^-\) and \(z^+\). Note that for all \(i \in [k]\), the in-neighborhood of \(y_i\) in \(T \setminus (X \cup Y \cup \{z^+, z^-\})\) lies within \(Y^-\), and the out-neighborhood of \(x_i\) in \(T \setminus (X \cup Y \cup \{z^+, z^-\})\) is within \(U_1^+ \cup U_2^+\). To reach \( Y \), each path in \(\mathcal{Q} \setminus \{Q_a, Q_b\}\) must pass through \(\text{Ter}(\mathcal{P})\) to \( Y^{-} \) and then to \( Y \), as no arcs exist from \(V(\mathcal{P})\setminus \text{Ter}(\mathcal{P})\) to \(Y^{-} \cup Y\) in \(T_1\setminus \{z^-,z^+\}\). This implies 
\begin{equation}
\text{\(\mathcal{Q}\) must cover at least \(k - 2\) vertices in each of \(Y^{-}\) and \(\text{Ter}(\mathcal{P})\).}
\label{2}
\end{equation}

Let \(\mathcal{Q}'\) be the set consisting of the subpaths from the start vertex to the first vertex in \(\text{Ter}(\mathcal{P})\) for each path in \(\mathcal{Q} \setminus \{Q_a, Q_b\}\). A subpath \(Q'_i \in \mathcal{Q}'\) is termed \textbf{acceptable} if it starts at \(x_i\) and ends at \(z^{l+1}_j \in \text{Ter}(\mathcal{P})\) with \(i, j \in I_1\) or \(i, j \in I_2\), where \(I_1 = [\lfloor k/2 \rfloor]\) and \(I_2 = [k]\setminus [\lfloor k/2 \rfloor]\).

We assert that there are at most 4 non-acceptable subpaths starting from \(X_2\) among \(\mathcal{Q}'\). For any such non-acceptable subpath, say an \( (x_i, z^{l+1}_j)\)-path \(Q'_i\), we have \(i \in I_2\) and \(j \in I_1\). According to our construction (mainly because (B2)-(B3)), \(Q_i[z^{l+1}_j,y_i] \) (\(Q_i \) is the path that contains the subpath $Q'_i$ in $\mathcal{Q}$) has to utilize at least two vertices in \(\text{Ter}(\mathcal{P})\) or \(Y^-\). Combined with (\ref{2}), this limits the number of non-acceptable subpaths from \(X_2\) to at most 4. Hence, \(\mathcal{Q}'\) contains at least \(\lceil k/2 \rceil - 4 - x\) acceptable subpaths starting from \(X_2\), where \(x\) denotes the number of paths in \(\{Q_a, Q_b\}\) that start from \(X_2\). This occupies at least \(\lceil k/2 \rceil - 4 - x\) vertices in $\{z^{l+1}_i\mid i\in I_2\}$. Consequently, there are at most \(4 + x\) non-acceptable subpaths starting from \(X_1\), as each such subpath must use a vertex in \(\{z^{l+1}_i \mid i \in I_2\}\). Note that \(\mathcal{Q}'\) has at least $(\lfloor k/2 \rfloor -2+x)$ subpaths starting from \(X_1\). This yields that the number of acceptable subpaths starting from \(X_1\) in \(\mathcal{Q}'\) is at least \((\lfloor k/2 \rfloor -2+x) - (4+ x)=\lfloor k/2 \rfloor -6\).

Let \(X^+\) denote the set of second vertices from all paths in \(\mathcal{Q}\). Define \(X^+_1 = X^+ \cap U^+_1\) and \(X^+_2 = X^+ \cap U^+_2\), where \(|X^+_2| = \lceil k/2 \rceil\). Since \(U_2^+ \Rightarrow U_1^+\) and there are no arcs from \(U^- \cup (V(G) \setminus \text{Ter}(\mathcal{P}))\) to \(Y^- \cup Y \cup Y^+\) in \(T \setminus (X\cup \{z^-, z^+\})\), the \(\lfloor k/2 \rfloor - 6\) acceptable subpaths from \(X_1\) must consist of \(\lfloor k/2 \rfloor - 6\) paths from \(X_1\) to \(X^+_1\), \(\lfloor k/2 \rfloor - 6\) paths from \(X^+_1\) to \(H^0\), and \(\lfloor k/2 \rfloor - 6\) paths from \(H^0\) to \(H^{l+1}\) in \(G \setminus \{z^{\lfloor k/2 \rfloor +1}_i, \ldots, z^{l+1}_k\}\), while avoiding the \(\lceil k/2 \rceil\) vertices in \(X^+_2\). This implies there are \(\lfloor k/2 \rfloor - 6\) paths from \(H^0\) to \(H^{l+1}\) in \(G\setminus \{z^{\lfloor k/2 \rfloor +1}_i, \ldots, z^{l+1}_k\}\) that avoid \(X^+_2\). However, this contradicts Property \ref{prop1}.
\end{proof}
\end{proof}

\section{Proof Theorem \ref{main2}}

\subsection{Preliminary}
In our proofs, we utilize the following easy consequence of Menger's Theorem.
\begin{theorem}\label{menger}
\cite{menger} Let $D$ be a $k$-connected digraph. Then for any two disjoint sets $\{x_1, \ldots$, $x_k\}$ and $\{y_1, \ldots, y_k\}$ of $V(D)$, there exist $k$ disjoint $(x_i, y_{\pi(i)})$-paths for some permutation $\pi$ of $\{1, 2, \ldots, k\}$.
\end{theorem}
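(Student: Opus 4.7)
The plan is to reduce the statement to the classical single-source/single-sink vertex version of Menger's theorem. First I would form an auxiliary digraph $D'$ by adjoining two new vertices $s$ and $t$ to $D$, together with the arcs $\{sx_i : i \in [k]\}$ and $\{y_i t : i \in [k]\}$. Once one shows that every $(s,t)$-vertex separator in $D'$ has size at least $k$, the classical vertex Menger theorem yields $k$ internally disjoint $(s,t)$-paths in $D'$. Stripping $s$ and $t$ from these paths gives $k$ pairwise disjoint paths in $D$, each from some $x_i$ to some $y_j$, and the internal-disjointness forces the set of starting vertices to be a permutation of $\{x_1,\ldots,x_k\}$ and the set of terminal vertices to be a permutation of $\{y_1,\ldots,y_k\}$; this yields the permutation $\pi$.

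The heart of the argument is the separator bound, which I would prove by contradiction. Suppose $S \subseteq V(D)$ has $|S| \le k-1$ and separates $s$ from $t$ in $D' \setminus S$. Since $|\{x_1,\ldots,x_k\}| = |\{y_1,\ldots,y_k\}| = k > |S|$, by the pigeonhole principle there exist $x_a \notin S$ and $y_b \notin S$. The $k$-connectivity of $D$ combined with $|S| \le k-1$ guarantees an $(x_a, y_b)$-path $Q$ in $D \setminus S$. But then the arc $s x_a$, followed by $Q$, followed by the arc $y_b t$, is an $(s,t)$-path in $D' \setminus S$, contradicting that $S$ is an $(s,t)$-separator. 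Hence no $(s,t)$-separator of size less than $k$ exists.

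For the bookkeeping, among the $k$ internally disjoint $(s,t)$-paths given by Menger, the vertex immediately following $s$ on each path is some $x_i$, because $N^+_{D'}(s) = \{x_1,\ldots,x_k\}$; since these second vertices are internal and the paths are internally disjoint, they are pairwise distinct, hence form a permutation of $\{x_1,\ldots,x_k\}$. The same reasoning applied at $t$ gives distinct penultimate vertices forming a permutation of $\{y_1,\ldots,y_k\}$, thereby defining $\pi$. I do not anticipate any real obstacle: the argument is a textbook Menger-style reduction, and the only place the connectivity hypothesis enters is the pigeonhole step $|S| < k = |X| = |Y|$ combined with the definition of $k$-connectivity given in the introduction.
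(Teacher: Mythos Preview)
The paper does not supply its own proof of this statement; it is quoted as a known ``easy consequence of Menger's Theorem'' and simply cited. Your reduction to the single-source/single-sink vertex form of Menger via the auxiliary vertices $s,t$ is the standard textbook argument and is correct as written, including the pigeonhole step for the separator bound and the bookkeeping that forces the second and penultimate vertices to exhaust $\{x_1,\ldots,x_k\}$ and $\{y_1,\ldots,y_k\}$ respectively.
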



We define the main tools, namely, ``nearly out-dominating vertex" and ``nearly in-dominating set". Let \( c \in \mathbb{N} \), and let \( u, v \) be two vertices of a digraph \( D \). We say $v$ is \textbf{$\boldsymbol{c}$-in-good for $\boldsymbol{u}$} in $D$ if either $v\rightarrow u$ or at least \( c \) internally disjoint \( (v, u) \)-paths of length 2 exist in \( D \). And we say $v$ is \textbf{$\boldsymbol{c}$-out-good for $\boldsymbol{u}$} in $D$ if either $u\rightarrow v$ or there exist at least $c$ internally disjoint $(u, v)$-paths of length 2 in $D$. Similar definition (i.e., $c$-good for $u$) have appeared in \cite{Zhou(2025)}. However, the newly introduced strategy of adjustment paths requires the definition of a ``nearly out-dominating vertex". Thus, we further distinguish between \( c \)-in-good for \( u \) and \( c \)-out-good for \( u \).

\begin{definition}\label{def1}
 A vertex $u$ is a \textbf{nearly in-dominating  vertex} of $D$ if, for every $c \in \mathbb{N}$, all but at most $2c$ vertices are $c$-in-good for $u$. Similarly, a vertex $u$ is a \textbf{nearly out-dominating vertex} of $D$ if all but at most $2c$ vertices are $c$-out-good for $u$ for every $c \in \mathbb{N}$.
\end{definition} 
 \begin{definition}\label{def2}
  We call a set $U$ of vertices  a \textbf{nearly in-dominating  set} of $D$ if, for every vertex $u\in U$ and every $c \in \mathbb{N}$, all but at most $2c$ vertices in $D \setminus U$ are $c$-in-good for $u$ in $D$.
\end{definition}

We need the following simple lemma in \cite{Zhou(2025)}, which establishes the existence of a nearly in-dominating vertex and a nearly out-dominating vertex in a semicomplete digraph. For readers' convenience, we include a short proof here.
\begin{lemma}\label{key1}
\cite{Zhou(2025)} Every semicomplete digraph $D$ contains a nearly out-dominating  vertex. Symmetrically, every semicomplete digraph $D$ contains a nearly in-dominating  vertex.
\end{lemma}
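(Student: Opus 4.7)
The plan is to take $u$ to be a vertex of maximum out-degree in $D$ and show that it is a nearly out-dominating vertex; the statement about a nearly in-dominating vertex then follows by applying the same argument to the reverse digraph (equivalently, to a vertex of maximum in-degree). Fix any $c\in\mathbb{N}$. Observe first that every internally disjoint $(u,v)$-path of length $2$ has the form $u\to w\to v$ with $w\in N^+(u)\cap N^-(v)$, so the maximum number of such paths equals $|N^+(u)\cap N^-(v)|$. Hence the ``bad set''
$$B=\{v\in V(D)\setminus\{u\}:v\text{ is not }c\text{-out-good for }u\}$$
consists precisely of the $v$ with $v\notin N^+(u)$ and $|N^+(u)\cap N^-(v)|\le c-1$.

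The key step is to use semicompleteness to push the out-degree of each $v\in B$ close to $d^+(u)$. Because $D$ is semicomplete and $v\notin N^+(u)$, we must have $vu\in A(D)$, so $u\in N^+(v)$. Set $C=V(D)\setminus(N^+(u)\cup\{u\})$, which contains $B$. For every $w\in N^+(u)$ with $wv\notin A(D)$, semicompleteness forces $vw\in A(D)$, and by hypothesis at most $c-1$ of the $w\in N^+(u)$ lie in $N^-(v)$. Counting these out-neighbors of $v$ inside $N^+(u)$ together with $u$ and the out-neighbors of $v$ inside $C$ (three disjoint contributions) yields
$$d^+(v)\ge\bigl(d^+(u)-(c-1)\bigr)+1+d^+_C(v),$$
where $d^+_C(v)=|N^+(v)\cap C|$. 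The maximality of $d^+(u)$ then gives $d^+_C(v)\le c-2$ for every $v\in B$.

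The final step converts this per-vertex bound into a bound on $|B|$: the subdigraph $D\langle B\rangle$ is semicomplete, so summing out-degrees inside $B$ and using $d^+_B(v)\le d^+_C(v)$ gives
$$\binom{|B|}{2}\le\sum_{v\in B}d^+_B(v)\le\sum_{v\in B}d^+_C(v)\le(c-2)|B|,$$
whence $|B|\le 2c-3<2c$. Since $c$ was arbitrary, $u$ is a nearly out-dominating vertex of $D$. The subtlety I expect to matter most is the fact that in a semicomplete (rather than a tournament) digraph a vertex can lie in both $N^+(u)$ and $N^-(u)$; this is handled cleanly by defining $B$ via $v\notin N^+(u)$ instead of $v\in N^-(u)$, which automatically makes $B\subseteq C$ disjoint from $N^+(u)\cup\{u\}$ and keeps the three contributions to $d^+(v)$ above free of double-counting.
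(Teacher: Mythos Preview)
Your proof is correct and follows essentially the same strategy as the paper: pick a vertex of maximum out-degree, show that every ``bad'' vertex $v$ (not $c$-out-good for $u$) must have large out-degree forced by semicompleteness and the bound $|N^+(u)\cap N^-(v)|\le c-1$, then use an averaging argument on the induced subdigraph on the bad set to bound its size.

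The only noteworthy difference is technical. The paper first passes to a spanning tournament $T$ of $D$, takes $u$ of maximum out-degree in $T$, and does all the degree counting inside $T$; this lets them use the tournament inequality $\delta^-(T\langle S\rangle)\le(|S|-1)/2$ and obtain $|S|\le 2c-1$. You instead work directly in $D$, handling possible double arcs by defining $B$ via $v\notin N^+(u)$ and partitioning $V(D)$ into $N^+(u),\{u\},C$; your out-degree count $d^+(v)\ge d^+(u)-(c-1)+1+d^+_C(v)$ then yields $d^+_B(v)\le c-2$ and hence the slightly sharper bound $|B|\le 2c-3$. Your route is marginally cleaner (no auxiliary tournament needed) and gives a marginally better constant, but the two arguments are the same in spirit.
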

\begin{proof}
Fix \( c \in \mathbb{N} \) as in the definition of nearly out-dominating vertex, and let \( T \) be a spanning tournament of \( D \). Choose \( u \) to be a vertex of maximum out-degree in \( T \). Define  
$$
S = \{ w \in N^-_T(u) \mid w \text{ is not \( c \)-out-good for \( u \) in } D \}.
$$ 
If $|S|<2c$, by defibition, $u$ is a nearly out-dominating vertex. Hence, assume \( |S| > 2c \). By the definition of \( c \)-out-goodness, each \( w \in S \) lacks \( c \) independent \((u, w)\)-paths of length 2 in \( T \), implying  $|N^-_T(w) \cap N^+_T(u)| \leq c - 1.$ Combining this with \( d^+_T(w) \leq d^+_T(u) \), we derive that   
\begin{equation*}
\begin{aligned}
|N^-_{T}(w) \cup N^+_{T}(u)|
&= |N^-_{T}(w)|+| N^+_{T}(u)|-|N^-_{T}(w) \cap N^+_{T}(u)|\\
&\geq |T|-1-d^+_{T}(w)+d^+_{T}(u)-(c-1)\\
&\geq |T|-1-c+1\\
&\geq |T|-c.
\end{aligned}
\end{equation*}
Thus, at most \( c \) vertices in \( T \) are neither in-neighbours of \( w \) nor out-neighbours of \( u \). Since \( S \Rightarrow u \), every \( w \in S \) has at least \( |S| - c \) in-neighbours within \( S \), forcing \( \delta^-(T\langle S \rangle) \geq |S| - c \). However, in any tournament, \( \delta^-(T\langle S \rangle) \leq \frac{|S| - 1}{2} \). So \(|S| - c \leq \frac{|S| - 1}{2}\), from which we can derive \(|S| \leq 2c - 1\). This contradicts the previously stated \(|S| > 2c\), and thus \(u\) is a nearly out-dominating vertex in \(D\). By reversing all arcs of $D$, we obtain that every semicomplete digraph $D$ contains a nearly in-dominating  vertex.
\end{proof}

The following definition will be repeatedly used in the proof of Theorem \ref{main2}.

\begin{definition}\label{def3}
Let $\gamma$ be a positive integer, and $D$ be a digraph with a vertex $v$ and a subset $U$ of $V(D)\setminus \{u\}$, we say that the vertex $v$ is:
\begin{itemize}[itemsep=0pt, topsep=3pt,parsep=2pt]
    \item a \textbf{$\boldsymbol{\mathbf{\gamma}}$-out-dominator of $\boldsymbol{U}$} if $v$ has at least $\gamma$ out-neighbours in $U$; and
    \item a \textbf{$\boldsymbol{\gamma}$-in-dominator of $\boldsymbol{U}$} if $v$ has at least $\gamma$ in-neighbours in $U$.
\end{itemize}
\end{definition}

\subsection{Proof of Theorem \ref{main2}}
Suppose \( D \) is a \((2k+1)\)-connected semicomplete digraph with \( \delta^+(D) \geq 7k^2 + 36k \) and disjoint subsets \( X = \{x_1, \ldots, x_k\} \), \( Y = \{y_1, \ldots, y_k\} \). Our purpose is to link $x_i$ to $y_i$ for each $i\in [k]$. Through iterative application of Lemma \ref{key1} to \( D \setminus (X \cup Y \cup \bigcup_{j < i} \{u_j\}) \), we construct a set \( U = \{u_1, \ldots, u_{3k}\} \), where each \( u_i \) is a nearly in-dominating vertex within \( D \setminus (X \cup Y \cup \bigcup_{j < i} \{u_j\}) \). For each \( x \in X \), define \( N_x \) as the collection of all \((2k+1)\)-out-dominators of \( U \) within \( N^+_{D \setminus (X \cup Y \cup U)}(x) \). According to the order of $N_x$, partition \( X \) into \( X_1 \) and \( X_2 \), where \( X_1 = \{x \in X \mid |N_x| \geq 7k^2 + 6k + 1\} \), and \( X_2 = X \setminus X_1 \), and note that \( X_1 \) or \( X_2 \) may be empty. Let \( N = \bigcup_{x \in X_1} N_x \); the vertices in \( N \) play a crucial role in finding disjoint short paths from \( X_1 \) to \( U \). If \( N \neq \emptyset \), apply Lemma \ref{key1} to select a nearly out-dominating vertex \( y_{k+1} \in N \) within the subgraph \( D\langle N \rangle \); otherwise, \( N = \emptyset \), i.e., $X_1=\emptyset$, the vertex \( y_{k+1} \) is unnecessary. To finalize the proof of Theorem \ref{main2}, three subsequent steps are necessary. Below, we present an outline of these steps.

\textbf{Sketch of proof.}  
Initially, by $(2k+1)$-connectivity of $D$, Menger's theorem yields a collection \(\mathcal{Q}\) of \(k+1\) disjoint paths from \(U\) to \(Y \cup \{y_{k+1}\}\) in \(D \setminus X\). Next, our aim is to find a collection \(\mathcal{Q}^{**}\) of \(k\) disjoint paths from \(U\) to \(Y\) that minimizes \(|V(\mathcal{Q}^{**})|\), and construct a set \(\mathcal{P} = \{P_i\}_{i \in [k]}\) of \(k\) disjoint paths from \(X\) to \(X \cup U\), each of length at most 3. Additionally, \(V(\mathcal{P}) \cap V(\mathcal{Q}^{**}) = \emptyset\), and each vertex in \(\text{Ter}(\mathcal{P})\) is a 1-in-dominator of \(U \setminus \text{Init}(\mathcal{Q}^{**})\). To address the challenge of the existence of such paths starting from \(X_1\) in \(\mathcal{P}\), we introduce a path-adjustment program. Initiated from \(\mathcal{Q}\), the program leads to the optimized path system \(\mathcal{Q}^*\), and then \( \mathcal{Q}^{**} \) is derived by modifying \( \mathcal{Q}^* \). Finally, by using the minimality of \(|V(\mathcal{Q}^{**})|\) and the properties of \(\text{Ter}(\mathcal{P})\),  we obtain \(k\) disjoint paths to link \(\text{Ter}(\mathcal{P})\) with \(\text{Init}(\mathcal{Q}^{**})\) and these paths are internally disjoint from \(V(\mathcal{P})\cup V(\mathcal{Q}^{**})\). 

\textbf{Proof of Theorem \ref{main2}.} Since $ D \setminus X $ is $ (k+1) $-connected (by deleting $ |X| = k $ vertices from a $ (2k+1) $-connected digraph), Menger's Theorem guarantees $ k+1 $ disjoint paths from $ U $ to $ Y \cup \{y_{k+1}\} $ in $ D \setminus X $. Notice that if \(X_1 = \emptyset\), we choose no vertex \(y_{k+1}\) and directly find \( k \) disjoint paths from \( U \) to \( Y \) in \( D \setminus X \). Indeed, we assume \(X_1 \neq \emptyset\), as the case \( X_1 = \emptyset \) can be directly inferred from the arguments for \( X_1 \neq \emptyset \), with a simpler proof. Let $ \mathcal{Q} = \{Q_1, \ldots, Q_{k+1}\} $ denote such a collection, where each $ Q_i $ is a path from $ q_i \in U $ to $ y_i \in Y \cup \{y_{k+1}\} $. We select $ \mathcal{Q} $ to minimize the total number of vertices $ |V(\mathcal{Q})| $, ensuring each $ Q_i $ is minimal and $V(\mathcal{Q})\cap U=\text{Init}(\mathcal{Q}) $. 

Next, we aim to construct
\begin{itemize}[itemsep=0pt, topsep=3pt, parsep=2pt]
    \item[(I)] a path system $ \mathcal{Q}^* $ from $ U $ to $ Y \cup \{v\} $ (for some $ v \in N $), where each path is minimal in $ D \setminus X $ (i.e.,  $ V(\mathcal{Q}^*) \subseteq V(D) \setminus X $);
    \item[(II)] a subset $ X_1^* \subseteq N $ with $ |X_1^*| = |X_1| $, such that $ X_1^* \cap V(\mathcal{Q}^*) = \emptyset $, and there exists a perfect matching $ f: X_1 \to X_1^* $ in $ D $ (i.e., each $ (x_i, f(x_i)) $ is an arc).
\end{itemize}
 For such a path system \(\mathcal{Q}^*\), the terminal vertex of the path in \(\mathcal{Q}^*\) that is not in \(Y\) is called the \textbf{special terminal vertex} of \(\mathcal{Q}^*\). Initially, set \(\mathcal{Q}^* = \mathcal{Q}\), \(X_1^* = \emptyset\), and \(W = \emptyset\) (the set of vertices that are not candidates for the special terminal vertex of the new path system \(\mathcal{Q}^*\)). Let $I\subseteq X_1$ be the set such that there is a perfect matching from $I$ to $X^*_1$ during this process. Thus, in the initial state \(I = \emptyset\). To obtain the desired \(\mathcal{Q}^*\) and \(X_1^*\), we introduce a path-adjustment program, which consists of two procedures that are iteratively used.

\textbf{The procedure (A1)}: If there exists a vertex \(x_i \in X_1\setminus I\) such that \(N_{x_i} \setminus (V(\mathcal{Q}^*) \cup X^*_1) \neq \emptyset\), select a vertex \(x^*_i\) in \(N_{x_i} \setminus (V(\mathcal{Q}^*) \cup X^*_1)\). Update \(I:=I \cup \{x_i\}\) and \(X^*_1:=X^*_1 \cup \{x^*_i\}\). Repeat this process until we obtain sets \(I\) and \(X^*_1\) such that 
\begin{equation}\label{3}
\text{$I=X_1$ or $N_{x_i}\subseteq (V(\mathcal{Q}^*)\cup X^*_1)$ for any $x_i\in X_1\setminus I$.}
\end{equation}
Procedure (A1) ensures that there exist sets \(X^*_1\) and $\mathcal{Q}^*$ satisfying (I)-(II) when \(I = X_1\). When \(I \neq X_1\), we perform the following procedure (A2) once to obtain a new path system \(\mathcal{Q}^*\). This increases the number of vertices in \(I\) and that in \(X^*_1\). We then return to procedure (A1) until procedure (A1) terminates. Unless \(I = X_1\), we execute procedure (A2) once and repeat the process. Therefore, the entire process will definitely terminate, and require at most $|X_1|$ executions of procedure (A2). Specifically, the input \(\mathcal{Q}^*\) to procedure (A2) must have its special terminal vertex (denoted as \(o\)) satisfy the following condition:
\begin{equation}
\begin{split}
&\text{ all but at most } 4k+4 \text{ vertices in } \bigcup_{x_i\in X_1\setminus I}N_{x_i}\setminus (W\cup X_1^*) \\
&\text{\ \ \ \ \ \ \ \ \ \ \   are } (2k+2)\text{-out-good for $o$ in $D\langle N \rangle$.}
\end{split}
\label{7}
\end{equation}
This condition is verified at the initial step due to the selection of \(y_{k+1}\) and Definition \(\ref{def1}\) with \(c = 2k + 2\). The iterative step is verified after each completion of procedure (A2), as procedure (A1) does not modify the path system \(\mathcal{Q}^*\).

Now, we introduce the \textbf{procedure (A2)}: Initially, set \( I \), \( X^*_1 \), and \(\mathcal{Q}^*\) to the final outputs of the previous procedure (A1), and let \( o \) be the special terminal vertex of \(\mathcal{Q}^*\). Next, sequentially update \( W \), \( o \) (denoted as \( o' \) to distinguish it from the original vertex), \(\mathcal{Q}^*\), \( X^*_1 \), and \( I \). Define \( O_{good} \) as the set of all vertices in \(\bigcup_{x_i\in X_1\setminus I}N_{x_i}\setminus (W\cup X_1^*)\) that are \((2k+2)\)-out-good for \( o \) in \( D\langle N \rangle \) (these vertices can be used to adjust the paths in \(\mathcal{Q}^*\)). Append all vertices in \(\bigcup_{x_i\in X_1\setminus I}N_{x_i}\setminus (W\cup X_1^*)\) that are not \((2k+2)\)-out-good for \( o \) in \( D\langle N \rangle \) into \( W \). By (\ref{7}), there are at most \(4k+4\) such vertices. Then, put all vertices in \( X_1^{*} \) into $W$. To find a suitable vertex to add to \( X_1^{*} \), let \( F \) be the final two vertices of \( O_{good} \cap Q \) for each \( Q \in \mathcal{Q}^* \) (if they exist), and then append \( F \) into \( W \). Then \( W \) increases by at most \(4k+4+|X_1^*|+|F| \leq 7k+6\) vertices compared to its previous value.

Let \( O_{cand} = \bigcup_{x_i\in X_1\setminus I}N_{x_i} \setminus W \). Since \( X_1^{*} \subseteq W \), we have \( O_{cand} = \bigcup_{x_i\in X_1\setminus I}N_{x_i} \setminus (W \cup X_1^{*}) \). Since the procedure (A2) is executed at most \( |X_1| \leq k \) times in total, and each execution increases \( W \) by at most \( 7k + 6 \) vertices, the set \( O_{cand} \) remains non-empty at each iteration.  This is verified by \( |N_{x_i}| \geq 7k^2 + 6k + 1 \) for all \( i \in [l] \) and the following calculation:
\[
\left|\bigcup_{x_i\in X_1\setminus I} N_{x_i} \setminus W\right| \geq (7k^2 + 6k + 1) - (7k + 6)k \geq 1.
\]
By applying Lemma \ref{key1}, we can choose a vertex \( o' \in O_{cand} \) to be a nearly out-dominating vertex in \( D\langle O_{cand} \rangle \).

Since (A1) terminates with \( N_{x_i} \subseteq V(\mathcal{Q}^*) \cup X^*_1 \) for any \( x_i \in X_1 \setminus I \), and \( o' \notin W \), we have \( o' \in V(Q_j) \) for some \( Q_j \in \mathcal{Q}^* \) and \( o' \notin F \). Thus, there are at least two vertices, say \( x^* \) and \( x^{**} \), in \( O_{good} \) after \( o' \) along the path \( Q_j \), with \( x^* \) appearing before \( x^{**} \). Assume \( x^{*} \in N_{x_r} \) for some \( x_r \in X_1 \setminus I \). Our idea is to update \(\mathcal{Q}^*\), release the vertex \( x^{*} \), add it to \( X^*_1 \), and thus enlarge \( X^*_1 \) and $I$. To this end, we update \(\mathcal{Q}^*\) as follows. Since \( x^{**} \in O_{good} \), either \( o \rightarrow x^{**} \) or there exist at least \( 2k+2 \) internally disjoint 2-paths from \( o \) to \( x^{**} \). In the former case, update \( Q_j := Q_{k+1}[q_{k+1}, o] \circ Q_j[x^{**}, y_j] \) (see Fig. \ref{fig:multiple-images} (a)). In the latter case, there exist at least \( k+2 \) internally disjoint 2-paths from \( o \) to \( x^{**} \) that are internally disjoint with \( X_1^{*} \). If a 2-path \( owx^{**} \) exists with \( w \notin V(\mathcal{Q}^*) \cup X_1^{*} \), then update \( Q_{j} := Q_{k+1}[q_{k+1}, o] \circ owx^{**} \circ Q_j[x^{**}, y_j] \) (see Fig. \ref{fig:multiple-images} (b)). Otherwise, by the pigeonhole principle, there exists a path \( Q_m \in \mathcal{Q}^* \) containing two vertices \( r_1 \) and \( r_2 \) such that \( o \rightarrow r_1 \rightarrow x^{**} \) and \( o \rightarrow r_2 \rightarrow x^{**} \). Due to the minimality of each path in \(\mathcal{Q}^*\), we have \( m \neq j \). Update \( Q_{m} := Q_{k+1}[q_{k+1}, o] \circ Q_m[r_2, y_m] \) and \( Q_{j} := Q_{m}[q_{m}, r_1] \circ Q_j[x^{**}, y_j] \). If \( m = k+1 \), simply update \( Q_{j}:= Q_{m}[q_{m}, r_1] \circ Q_j[x^{**}, y_j] \) (see Fig. \ref{fig:multiple-images} (c)). Also, in either case, update \( Q_{k+1} := Q_j[q_j, o'] \). Moreover, each path is chosen to be minimal.

Notably, the process has released the vertex \( x^{*} \in O_{\text{good}} \subseteq \bigcup_{x_i\in X_1\setminus I} N_{x_i} \setminus (W \cup X_1^{*}) \).  
Recall that \( x^{*} \in N_{x_r} \) with \( x_r \in X_1 \setminus I \).  
We update \( I:= I \cup \{x_r\} \) and \( X_1^{*}:= X_1^{*} \cup \{x^{*}\} \).  
Recall that \( o' \in O_{\text{cand}} \) is a nearly out-dominating vertex in \( D\langle O_{\text{cand}} \rangle \).  
Furthermore, observe that:  
\[
\bigcup_{x_i \in X_1 \setminus (I \cup \{x_r\})} N_{x_i} \setminus (W \cup X_1^{*}) \subseteq O_{\text{cand}}.
\]  
By Definition \ref{def1}, all but at most \( 4k + 4 \) vertices in \( \bigcup_{i \in X_1 \setminus (I \cup \{x_r\})} N_{x_i} \setminus (W \cup X_1^{*}) \) are \( (2k + 2) \)-out-good for \( o' \) in \( D\langle O_{\text{cand}} \rangle \subseteq D\langle N \rangle \), fulfilling condition (\ref{7}).  
Finally, relabel the vertices in \( \text{Init}(\mathcal{Q}^*) \) such that each \( Q_i \in \mathcal{Q}^* \) remains a \( (q_i, y_i) \)-path for \( i \in [k] \), and \( Q_{k+1} \in \mathcal{Q}^* \) becomes a \( (q_{k+1}, o') \)-path.  
Proceed to run procedure (A1) unless \( I = X_1 \).
  
  \begin{figure}[htbp]
    \centering
    \begin{subfigure}[c]{0.3\textwidth}
        \centering
        \includegraphics[height=5cm, keepaspectratio]{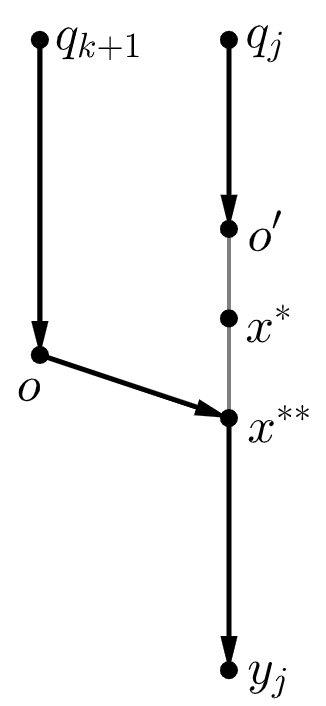}
        \caption{}
    \end{subfigure}
    \hfill
    \begin{subfigure}[c]{0.3\textwidth}
        \centering
        \includegraphics[height=5cm, keepaspectratio]{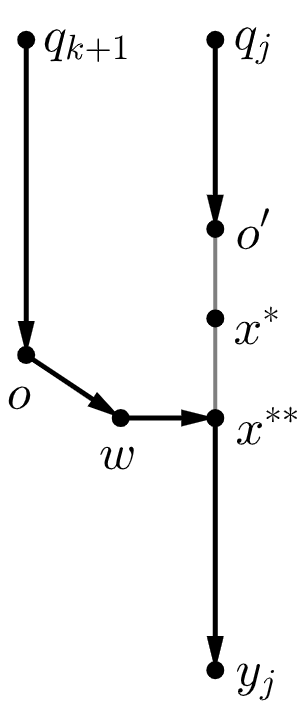}
        \caption{}
    \end{subfigure}
    \hfill
    \begin{subfigure}[c]{0.3\textwidth}
        \centering
        \includegraphics[height=5cm, keepaspectratio]{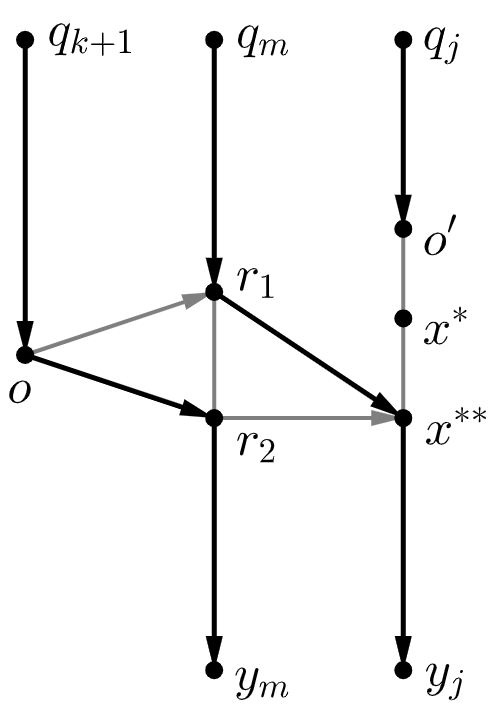}
        \caption{}
    \end{subfigure}
    \caption{Three strategies for adjusting the paths in $\mathcal{Q}^*$.}
    \label{fig:multiple-images}
\end{figure}

The entire path-adjustment program will definitely terminate, that is, it will eventually output a set \(I = X_1\), a set \(X_1^*\) consisting of \(|X_1|\) vertices, a special terminal vertex \(o\), and \(k+1\) disjoint paths \(\mathcal{Q}^*\) from \(U\) to \(Y \cup \{o\}\), such that (I)-(II) hold. In the subsequent proof, the path in \(\mathcal{Q}^*\) that terminates at \(o\) will no longer be needed. Therefore, we define \(\mathcal{Q}^{**}\) as the \(k\) disjoint paths from \(U\) to \(Y\) in \(\mathcal{Q}^*\). To construct subsequent paths, $\mathcal{Q}^{**}$ is chosen such that $|V(\mathcal{Q}^{**})|$ is minimized.

\begin{claim}
There exists a family \( \mathcal{P}_1 = \{P_i\}_{i \in [l]} \) of \( l \) pairwise disjoint \((x_i, p_i)\)-paths of length at most 3 from \( X_1 \) to \( \big(U \setminus \mathrm{Init}(\mathcal{Q}^{**})\big) \) with $p_i\in \big(U \setminus \mathrm{Init}(\mathcal{Q}^{**})\big)$, such that \( V(\mathcal{P}_1) \cap (V(\mathcal{Q}^{**})\cup X_2) = \emptyset \) and  
\begin{equation}
\begin{split}
&\text{\ \ \ \ \ every terminal } p_i \text{ has at least } 25k \text{ out-neighbours in } \\
& D \setminus (X \cup Y \cup U) \text{ that are 1-in-dominators of } U \setminus \mathrm{Init}(\mathcal{Q}^{**}).
\end{split}
\label{8}
\end{equation}
Moreover, let \( P_i \) be a trivial path that has only one vertex \( x_i \) for each \( x_i \in X_2 \), then $P_i$ also satisfies \( V(P_i) \cap V(\mathcal{Q}^{**}) = \emptyset \) and (\ref{8}).
\end{claim}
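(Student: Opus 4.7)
The plan is to use the perfect matching $f : X_1 \to X_1^*$ provided by property~(II) of the path-adjustment output to form a length-$2$ path $P_i = x_i f(x_i) p_i$ for each $x_i \in X_1$, and to keep each $X_2$ path trivial. Since $f(x_i) \in X_1^* \subseteq N$, every such $f(x_i)$ is a $(2k+1)$-out-dominator of $U$ lying in $V(D) \setminus (X \cup Y \cup U)$; consequently $f(x_i)$ has at least $(2k+1) - |\mathrm{Init}(\mathcal{Q}^{**})| = k+1$ out-neighbours in $U \setminus \mathrm{Init}(\mathcal{Q}^{**})$. Since $|X_1| = l \leq k$ and each candidate set for $p_i$ has size at least $k+1 \geq l$, Hall's theorem will yield a system of distinct representatives and produce the required distinct vertices $p_i$.

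Next I would verify that $V(\mathcal{P}_1) \cap (V(\mathcal{Q}^{**}) \cup X_2) = \emptyset$ layer by layer: each $x_i \in X_1$ lies outside $X_2$ and outside $V(\mathcal{Q}^{**}) \subseteq V(D) \setminus X$; each $f(x_i) \in X_1^*$ avoids $V(\mathcal{Q}^*)$ by property~(II) (hence avoids $V(\mathcal{Q}^{**})$) and lies in $V(D) \setminus (X \cup Y \cup U)$; and each $p_i \in U \setminus \mathrm{Init}(\mathcal{Q}^{**})$ is outside $V(\mathcal{Q}^{**})$ because $V(\mathcal{Q}^{**}) \cap U = \mathrm{Init}(\mathcal{Q}^{**})$. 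Pairwise disjointness of the $P_i$'s then follows from the distinctness of the vertices within each of the three layers $X_1$, $X_1^*$, $U \setminus \mathrm{Init}(\mathcal{Q}^{**})$. Each trivial path $\{x_i\}$ with $x_i \in X_2$ plainly avoids $V(\mathcal{Q}^{**}) \subseteq V(D) \setminus X$.

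The verification of (\ref{8}) splits cleanly into two cases. For $x_i \in X_1$, the terminal $p_i$ lies in $U \setminus \mathrm{Init}(\mathcal{Q}^{**})$, so every out-neighbour $w$ of $p_i$ in $D \setminus (X \cup Y \cup U)$ has $p_i$ itself as an in-neighbour in $U \setminus \mathrm{Init}(\mathcal{Q}^{**})$ and is thus automatically a 1-in-dominator of $U \setminus \mathrm{Init}(\mathcal{Q}^{**})$; the count is at least $\delta^+(D) - |X \cup Y \cup U| \geq 7k^2 + 31k \geq 25k$. For $x_i \in X_2$, the out-neighbourhood of $x_i$ in $D \setminus (X \cup Y \cup U)$ has size at least $7k^2 + 31k$, and by the definition of $X_2$ at most $|N_{x_i}| \leq 7k^2 + 6k$ of them are $(2k+1)$-out-dominators of $U$, so at least $25k$ out-neighbours $w$ satisfy $|N^+(w) \cap U| \leq 2k$. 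Semicompleteness of $D$ then gives $|N^-(w) \cap U| \geq |U| - |N^+(w) \cap U| \geq k$, and since $|\mathrm{Init}(\mathcal{Q}^{**})| = k$ I would argue via a short case analysis on the location of these in-arcs that each such $w$ has at least one in-neighbour in $U \setminus \mathrm{Init}(\mathcal{Q}^{**})$, hence is a 1-in-dominator.

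The main obstacle is the borderline subcase of the $X_2$ analysis in which $N^+(w) \cap U$ equals $U \setminus \mathrm{Init}(\mathcal{Q}^{**})$ exactly; controlling the number of such exceptional out-neighbours of $x_i$ requires either an additional counting argument against $N_{x_i}$ or the use of the nearly in-dominating property of the vertices of $\mathrm{Init}(\mathcal{Q}^{**})$ to show that at most $O(k)$ such $w$ arise. Everything else reduces to Hall's theorem and the structural properties already recorded for the path-adjustment output.
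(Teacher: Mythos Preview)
Your approach coincides with the paper's: build each $P_i$ for $x_i\in X_1$ as the 2-path $x_i\, f(x_i)\, p_i$ using the matching $f:X_1\to X_1^*$ from (II) together with a second matching from $X_1^*$ into $U\setminus\mathrm{Init}(\mathcal Q^{**})$, keep each $X_2$ path trivial, and verify (\ref{8}) by exactly the two-case split you describe.

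The only outstanding point is the borderline case you flag in the $X_2$ analysis, and the fix is a one-line threshold shift rather than any further counting argument or appeal to the nearly in-dominating property. The paper discards the $2k$-out-dominators of $U$ (not the $(2k+1)$-out-dominators): a vertex $w\in N^+_D(x_i)\setminus(X\cup Y\cup U)$ that is \emph{not} a $2k$-out-dominator of $U$ has at most $2k-1$ out-neighbours in $U$, hence by semicompleteness at least $|U|-(2k-1)=k+1$ in-neighbours in $U$, and therefore at least one in-neighbour in $U\setminus\mathrm{Init}(\mathcal Q^{**})$. The subtraction $\delta^+(D)-|X\cup Y\cup U|-(7k^2+6k)\ge 25k$ is unchanged. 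Strictly speaking, for the bound $|N_{x_i}|\le 7k^2+6k$ to control the number of $2k$-out-dominators one needs $N_x$ to be defined via the threshold $2k$ rather than $2k+1$; the paper is off by one here, but since the threshold $2k$ already suffices for the $X_1$ part (each $x_i^*$ then has at least $2k-k=k\ge l$ out-neighbours in $U\setminus\mathrm{Init}(\mathcal Q^{**})$, so the second matching still exists), nothing else in the argument changes.
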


\begin{proof}
First, we construct the paths \( \mathcal{P}_1 = \{P_i\}_{i \in [l]} \). A perfect matching \( M_1 \) from \( X_1 \) to \( X_1^* \) exists, where \( X_1^* \subseteq N \). Since each \( x_i^* \in X_1^* \) is a \( 2k \)-out-dominator of \( U \), it has at least \( 2k \) out-neighbours in \( U \). As \( |U \cap V(\mathcal{Q}^{**})| = k \), there is a perfect matching \( M_2 \) from \( X_1^* \) to \( U \setminus V(\mathcal{Q}^{**}) \). The composition \( M_1 \circ M_2 \) yields \( l \) disjoint 2-paths from \( X_1 \) to \( U \setminus V(\mathcal{Q}^{**}) \), avoiding \( V(\mathcal{Q}^{**}) \cup V(\mathcal{P}_2) \). For each vertex \( v \in U \setminus V(\mathcal{Q}^{**}) \), the number of out-neighbours in \( D \setminus (X \cup Y \cup U) \) is at least \( \delta^+(D) - |X \cup Y \cup U| \geq 25k \). Each such out-neighbour \( u \) is a 1-in-dominator of \( U \setminus \mathrm{Init}(\mathcal{Q}^{**}) \), as \( v \) is an in-neighbour of \( u \). Thus, \( \mathcal{P}_1 \) satisfies (\ref{8}).  

Now, we verify $P_i$ meets (\ref{8}) for each \( x_i \in X_2 \). By the selection of \( X_2 \), for each \( x_i \in X_2 \), the set \( N^+_D(x_i) \setminus (X \cup Y \cup U) \) contains at most \( 7k^2 + 6k \) distinct \( 2k \)-out-dominators of \( U \). Since \( D \) is semicomplete and \( |U| = 3k \), every vertex in \( N^+_D(x_i) \setminus (X \cup Y \cup U) \) not being a \( 2k \)-out-dominator of \( U \) must be a \( (k+1) \)-in-dominator of \( U \), and is a 1-in-dominator of \( U \setminus \mathrm{Init}(\mathcal{Q}^{**}) \) as \( |\mathrm{Init}(\mathcal{Q}^{**})|=k \). Therefore, the number of 1-in-dominators of \( U\setminus \mathrm{Init}(\mathcal{Q}^{**}) \) in \( N^+_D(x_i) \setminus (X \cup Y \cup U) \) satisfies  
\[
\delta^+(D) - |X \cup Y \cup U| - (7k^2 + 6k) \geq 25k.
\]
The path \( P_i \) ensures \( V(P_i) \cap V(\mathcal{Q}^{**}) = \emptyset \) and fulfills (\ref{8}) as desired.
\end{proof}

Define \( \mathcal{P} = \{P_i\}_{i \in [k]} \). The proof is completed by constructing a set \( \mathcal{R} \) of \( k \) disjoint paths linking \( p_i \) to \( q_i \) for each \( i \in [k] \) in the digraph $(D \setminus (V(\mathcal{Q}^{**}) \cup V(\mathcal{P}))) \cup \{p_1, \ldots, p_k, q_1, \ldots, q_k\}.$ To achieve this, we prove the following claim.

\begin{claim}\label{cla5}
For each \( i \in [k] \), there exist at least \( 2k \) internally disjoint \((p_i, q_i)\)-paths of length at most 3 in  $(D \setminus (V(\mathcal{Q}^{**}) \cup V(\mathcal{P}))) \cup \{p_i, q_i\}.$
\end{claim}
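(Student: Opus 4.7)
The plan is to search for the $2k$ paths inside $D' := (D - F) \cup \{p_i, q_i\}$, where $F = (V(\mathcal{Q}^{**}) \cup V(\mathcal{P})) \setminus \{p_i, q_i\}$, by combining a count of surviving useful out-neighbours of $p_i$ with the nearly in-dominating property of $q_i$. The key structural tool is a minimality-shortcut argument applied twice: once to bound the useful out-neighbours of $p_i$ lying in $V(\mathcal{Q}^{**})$, and once to bound the candidate intermediate vertices used in length-$3$ paths.

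First I would bound the useful out-neighbours of $p_i$ that survive in $D'$. The preceding claim supplies at least $25k$ useful out-neighbours of $p_i$ in $D \setminus (X \cup Y \cup U)$. If $w$ is such a vertex and $w$ is the $(s+1)$-th vertex of some $Q_j \in \mathcal{Q}^{**}$ with $s \geq 2$, I replace $Q_j$ by $p_i \to Q_j[w, y_j]$ (when $p_i \in U$, i.e.\ $x_i \in X_1$) or by $v' \to Q_j[w, y_j]$, where $v' \in U \setminus \mathrm{Init}(\mathcal{Q}^{**})$ is an in-neighbour of $w$ supplied by the $1$-in-dominator property (when $p_i = x_i \in X_2$). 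Each replacement is a strictly smaller $k$-tuple of disjoint $U$-to-$Y$ paths, contradicting the minimality of $|V(\mathcal{Q}^{**})|$. Hence $w$ must be a second vertex of some $Q_j$, giving at most $k$ such useful out-neighbours in $V(\mathcal{Q}^{**})$; combined with $|V(\mathcal{P}) \setminus (X \cup U)| = l$, at least $25k - k - l \geq 23k$ useful out-neighbours of $p_i$ lie in $D'$. Next, since $q_i$ is nearly in-dominating in $S_{q_i} := D \setminus (X \cup Y \cup \bigcup_{j < \mathrm{index}(q_i)} \{u_j\})$, taking $c = 9k$ shows that at most $18k$ vertices of $S_{q_i}$ are not $9k$-in-good for $q_i$; as the useful out-neighbours of $p_i$ lie in $D \setminus (X \cup Y \cup U) \subseteq S_{q_i}$, at least $23k - 18k = 5k$ of them are $9k$-in-good for $q_i$.

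Third, I would build the $2k$ paths greedily. At the $j$-th iteration, at most $2(j-1) \leq 4k - 2$ internal vertices have been used, and since $5k - (4k - 2) \geq k + 2$ there is always an unused $9k$-in-good useful out-neighbour $u$ of $p_i$ in $D'$. If $u \to q_i$, take the length-$2$ path $p_i \to u \to q_i$; otherwise $u$ admits $9k$ internally disjoint length-$2$ paths $u \to v \to q_i$ in $S_{q_i}$, and I apply the minimality-shortcut again to pick $v$: any $v$ lying in $V(Q_m)$ at position $s+1$ with $s \geq 3$ would let $\mathcal{Q}^{**}$ be shortened by replacing $Q_m$ with $p_i \to u \to v \to Q_m[v, y_m]$ (when $p_i \in U$) or with $v'' \to u \to v \to Q_m[v, y_m]$ (when $p_i = x_i \in X_2$, taking $v'' \in U \setminus \mathrm{Init}(\mathcal{Q}^{**})$ to be an in-neighbour of $u$), contradicting minimality. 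Hence any candidate intermediate $v$ lying in $V(\mathcal{Q}^{**})$ occupies one of the first three positions of some $Q_m$, giving at most $3k - 1$ such $v$; together with $|V(\mathcal{P}) \cap S_{q_i}| \leq 2l$, the set $F$ excludes at most $3k - 1 + 2l \leq 5k$ of the $9k$ candidate intermediates. After also discounting the $\leq 4k - 2$ previously used internal vertices, at least $9k - 5k - (4k - 2) \geq 2$ valid intermediates $v$ remain, yielding the length-$3$ path $p_i \to u \to v \to q_i$ in $D'$.

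The principal obstacle is the second application of the minimality-shortcut in the greedy step: one must simultaneously verify that each candidate replacement initial ($p_i$ for $X_1$, or $v''$ for $X_2$) lies genuinely outside $V(\mathcal{Q}^{**})$ and that the resulting $Q'_m$ is disjoint from the remaining paths of $\mathcal{Q}^{**}$, so that the contradiction with minimality is legitimate. The constants are tight: the choice $c = 9k$ is essentially forced by requiring $c > (3k - 1) + 2l + (4k - 2)$ for the intermediate selection to succeed, while $2c$ must stay below the slack $23k$ afforded by the useful out-neighbour count.
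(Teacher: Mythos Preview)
Your proposal is correct and follows essentially the same route as the paper: count the useful out-neighbours of $p_i$ surviving outside $V(\mathcal{Q}^{**})\cup V(\mathcal{P})$ via a minimality-shortcut, filter those that are $c$-in-good for $q_i$ using the nearly in-dominating property of $U$, and then for each survivor build a length-$\leq 3$ path, invoking the minimality-shortcut a second time to bound the intermediates that fall in $V(\mathcal{Q}^{**})$.

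The differences are cosmetic. The paper takes $c=10k$, first fixes a set $C$ of $2k$ good survivors, and then for each $v\in C$ selects a second intermediate $w\notin V(\mathcal{Q}^{**})\cup V(\mathcal{P})\cup C$; you take $c=9k$ and run a single greedy loop picking both intermediates one path at a time. The paper also handles the shortcut uniformly by always using the $1$-in-dominator property to find the new initial vertex in $U\setminus\mathrm{Init}(\mathcal{Q}^{**})$, whereas you case-split on whether $p_i\in U$ (using $p_i$ itself) or $p_i=x_i\in X_2$ (using an auxiliary $v''$). Your tighter bookkeeping ($l$ and $2l$ in place of the paper's $2k$ and $3k$ for $|V(\mathcal{P})|$-contributions) is what lets the smaller $c=9k$ work. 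The ``principal obstacle'' you flag---that the replacement initial genuinely lies outside $V(\mathcal{Q}^{**})$---is resolved exactly as in the paper: minimality forces $V(\mathcal{Q}^{**})\cap U=\mathrm{Init}(\mathcal{Q}^{**})$, so any vertex of $U\setminus\mathrm{Init}(\mathcal{Q}^{**})$ (including $p_i$ when $x_i\in X_1$) is a legitimate new start.
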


\begin{proof}
For each \( i \in [k] \), define \( N^-_{p_i} \) as the set of all 1-in-dominators of \( U \setminus \mathrm{Init}(\mathcal{Q}^{**}) \) in \( N^+(p_i) \setminus (X \cup Y \cup U) \). By condition (\ref{8}), we have \( |N^-_{p_i}| \geq 25k \). The minimality of \( |V(\mathcal{Q}^{{**}})| \) implies \( |N^-_{p_i} \cap V(\mathcal{Q}^{{**}})| \leq k \). Suppose otherwise, then there exists a vertex \( v \in N^-_{p_i} \cap Q_j \) that is the third or a subsequent vertex on \( Q_j \) (note \( \mathrm{Init}(\mathcal{Q}^{**}) \subseteq U \), so \( \mathrm{Init}(\mathcal{Q}^{**}) \cap N^-_{p_i} = \emptyset \)). Since \( v \) is a 1-in-dominator of \( U \setminus \mathrm{Init}(\mathcal{Q}^{**} ) \), there exists \( u \in U \setminus \mathrm{Init}(\mathcal{Q}^{**}) \) with \( u \to v \). Replacing \( Q_j \) with \( uv \circ Q_j[v, y_j] \) yields a shorter path, contradicting the minimality of \( \mathcal{Q}^{**} \).  Since \( |\mathrm{Int}(\mathcal{P})| \leq 2k \), the remaining set $B = N^-_{p_i} \setminus (V(\mathcal{Q}^{**}) \cup V(\mathcal{P}))$   
satisfies \( |B| \geq 22k \), where every \( v \in B \) is a 1-in-dominator of \( U \setminus \mathrm{Init}(\mathcal{Q}^{**}) \). 

Since \( U \) is nearly in-dominating in \( D \setminus (X \cup Y) \) and \( q_i \in U \), Definition \ref{def2} with \( c = 10k \) implies at most \( 20k \) vertices in \( D \setminus (X \cup Y \cup U) \) are not \( 10k \)-in-good for \( q_i \). Thus, \( B \) contains a subset \( C \) (\( |C| = 2k \)) where each \( v \in C \) is \( 10k \)-in-good for \( q_i \) in \( D \setminus (X \cup Y) \). Specifically, for each \( v \in C \), either \( v \rightarrow q_i \) or there are \( 10k \) internally disjoint 2-paths from \( v \) to \( q_i \) in \( D \setminus (X \cup Y) \). If \( v \rightarrow q_i \),  then  $p_ivq_i$ is a desired path internally disjoint from \( V(\mathcal{Q}^{**}) \cup V(\mathcal{P}) \).   Otherwise, there exist \( 10k \) internally disjoint 2-paths from \( v \) to \( q_i \) in \( D \setminus (X \cup Y) \).  Among these paths, at most \( 3k \) intersect \( V(\mathcal{Q}^{**}) \). Since otherwise, there is such a path $vwq_i$ such that $w\in Q_j$ is at least the forth vertex on $Q_j$. Since \( v \in C \subseteq B \), there exists \( u \in U \setminus \mathrm{Init}(\mathcal{Q}^{**}) \) with \( u \to v \). Then $uvw\circ Q_j[v,y_j]$ is a shorter path than $Q_j$, which is disjoint from the other paths in $\mathcal{Q}^{**}\setminus Q_j$, a contradiction.  Additionally, among the $10k$ paths, there are at most \(3k\) that intersect with \(V(\mathcal{P})\) (as these 2-paths in \( D \setminus (X \cup Y) \) are disjoint with \( X \), i.e., Init$(\mathcal{P})$). Hence, there are still at least $2k$ such 2-paths internally disjoint with \( V(\mathcal{Q}^{**}) \cup V(\mathcal{P}) \cup C \). Therefore, there are at least $2k$ internally disjoint $(p_i,q_i)$-paths of length at most 3 in $ (D \setminus  {(}V(\mathcal{Q}^{**}) \cup V(\mathcal{P})) )\cup \{p_i, q_i\}$, the proof is complete.   
\end{proof}

{Claim \ref{cla5} allows us to construct {a set $\mathcal{R}$ of} $k$ disjoint paths of lengths at most 3 linking $p_i$ to $q_i$ for $i \in [k]$ in $(D \setminus  {(}V(\mathcal{Q}^{**}) \cup V(\mathcal{P})) )\cup \{p_1, \ldots, p_k, q_{1}, \ldots, q_k\}$. Indeed, assuming we have constructed the paths $R_1,\ldots,R_i$ $(i<k)$, then we have $|\text{Int}(R_1)\cup\cdots\cup \text{Int}(R_i)|\leq 2i<2k$, there also exists a $(p_{i+1},q_{i+1})$-path $R_{i+1}$ disjoint from $V(R_1)\cup\cdots\cup V(R_i)\cup\{p_{i + 2},\ldots,p_k\cup\{q_{i+2},\ldots,q_{k}\}$. Then repeat this process until we have the required $k$ paths, which concludes the proof.}
\hfill $\square$



\begin{thebibliography}{99}

\bibitem{Bang(1989)}
 J. Bang-Jensen, On the 2-linkage problem for semicomplete digraphs, In
 Graph theory in memory of G. A. Dirac (Sandbjerg, 1985), Ann.
 Discrete Math. 41 (1989) 23--37.



\bibitem{Bang(2009)}
J. Bang-Jensen, G. Gutin, Digraphs: Theory, Algorithms and Applications, Springer-Verlag London, Ltd., London, 2009.





 
\bibitem{Bang(2021)}
J. Bang-Jensen, K.S. Johansen, Every $(13k - 6)$-strong tournament with minimum out-degree at least $28k - 13$ is $k$-linked, Discrete Math. 345 (2022)  112831.




\bibitem{Bollobas(1996)}
 B. Bollob\'{a}s, A. Thomason, Highly linked graphs, Combinatorica 16 (1996) 313--320.
 
\bibitem{Erd(1969)}
 P. Erd\H{o}s, A. Hajnal, On complete topological subgraphs of certain graphs,  Ann. Univ. Sci. Budapest 7 (1969) 193--199.

\bibitem{Girao(2021)}
 A. Gir\~{a}o, K. Popielarz, R. Snyder, $(2k + 1)$-connected tournaments with large minimum out-degree are $k$-linked, Combinatorica 41 (2021) 815--837.

\bibitem{Snyder(2019)}
A. Gir\~{a}o, R. Snyder, Highly linked tournaments with large minimum out-degree, J. Combin. Theory Ser. B 139 (2019) 251--266.


\bibitem{jung1970}
 H.A. Jung, {{Eine Verallgemeinerung des n-fachen zusammenhangs f\"ur Graphen}}, Math. Ann. 187 (1970) 95--103.


\bibitem{Komlos(1996)}
J. Koml\'{o}s, E. Szemer\'{e}di, Topological cliques in graphs II, Combin. Probab. and Comput. 5 (1996) 79--90.

\bibitem{Khn(2014)}
D. K\"{u}hn, J. Lapinskas, D. Osthus, V. Patel, Proof of a conjecture of Thomassen on Hamilton cycles in highly connected tournaments, Proc. Lond. Math. Soc. 109 (2014) 733--762.

\bibitem{Larman(1974)}
 D.G. Larman, P. Mani, On the existence of certain configurations within graphs and the 1-skeletons of polytopes, Proc. Lond. Math. Soc. 20 (1974) 144--160.

\bibitem{Li(2008)}
N. Lichiardopol, A new lower bound on the strong connectivity of an oriented graph. Application to diameters with a particular case related to Caccetta H\"{a}ggkvist conjecture, Discrete Math. 303 (2008) 5274--5279.


\bibitem{Mader(1996)} W. Mader, On topological tournaments of order 4 in digraphs of outdegree 3, J. Graph Theory 21 (1996) 371--376.
 
\bibitem{Meng(2021)}
W. Meng, M. Rolek, Y. Wang, G. Yu, An improved linear connectivity bound for tournaments to be highly linked, European J. Combin. 98 (2021) 103390.

\bibitem{menger}
K. Menger, Zur allgemeinen Kurventheorie, Fund. Math. 10 (1927) 96--115.


\bibitem{Pokrovskiy(2015)}
A. Pokrovskiy, Highly linked tournaments, J. Combin. Theory Ser. B 115 (2015) 339--347.


\bibitem{Thomas(2005)}
  R. Thomas, P. Wollan, An improved linear edge bound for graph linkages, European J. Combin. 26 (2005) 309--324.

\bibitem{Thomassen(1980)}
  C. Thomassen, 2-linked graphs, European. J. Combin. 1 (1980) 371--378.


\bibitem{Thomassen(1984)}
C. Thomassen, Connectivity in tournaments, in: Graph Theory and Combinatorics, in: A Volume in Honour of Paul Erd\H{o}s, Academic Press, London (1984) 305--313.


\bibitem{Thomassen(1991)}
C. Thomassen, Highly connected non-2-linked digraphs, Combinatorica 11 (1991) 393--395.



\bibitem{Zhou(2025)}
J. Zhou, J. Bang-Jensen, J. Yan, On the $k$-linkage problem for generalizations of semicomplete digraphs, arXiv:2503.10295.

\bibitem{Zhou(2023)}
J. Zhou, Y. Qi, J. Yan, Improved results on linkage problems, Discrete Math.
346 (2023) 113351.

\bibitem{Zhou(2024)}
J. Zhou, J. Yan, Solutions to the linkage conjecture in tournaments, 	arXiv:2412.08180.

\end{thebibliography}
\end{document}